\documentclass[11pt,twoside]{article}
\topmargin=-1cm
\textheight=23.5cm
\textwidth=16.0cm

\usepackage{amsmath,amssymb,amsthm}
\usepackage{amsmath,amssymb,amsthm}
\usepackage{graphicx}
\usepackage{epstopdf}
\usepackage{caption}
\allowdisplaybreaks[4]

\newtheorem{lm}{Lemma}[section]
\newtheorem{thm}{Theorem}[section]

\newcounter{saveeqn}%

\makeatletter
\oddsidemargin 0.0in
\evensidemargin
\oddsidemargin
\marginparwidth 0.0in
\makeatother

\makeatletter
\@addtoreset{equation}{section}
\makeatother


\title{\Large\bf Periodic orbits of linear Filippov systems with a line of discontinuity
\thanks{
Supported by NSFC 11471228, Graduate Student's Research and Innovation Fund of Sichuan University 2018YJSY047.
}
}

\author{Tao Li, ~~~~Xingwu Chen\!\!
\footnote{Author to whom any correspondence should be addressed. Email address: xingwu.chen@hotmail.com (X. Chen).}
\\
{\small School of Mathematics, Sichuan University,}
{\small Chengdu, Sichuan 610064, P. R. China}
}
\date{}


\begin{document}
\maketitle

\begin{abstract}
In this paper we consider periodic orbits of planar linear Filippov systems with a
line of discontinuity. Unlike many publications researching only the maximum number of crossing periodic orbits,
we investigate not only the number and configuration of sliding periodic orbits, but also
the coexistence of sliding periodic orbits and crossing ones. Firstly, we prove that the number of sliding periodic orbits is at most 2, and
give all possible configurations of one or two sliding periodic orbits. Secondly,
we prove that two sliding periodic orbits coexist with at most one crossing periodic orbit, and one
sliding periodic orbit can coexist with two crossing ones.
\vskip 0.2cm
{\bf Keywords:} Filippov systems, periodic orbits, $\Sigma$-equivalence, sliding motions.
\end{abstract}

\baselineskip 15pt
\parskip 10pt
\thispagestyle{empty}
\setcounter{page}{1}

\section{Introduction and main results}

\setcounter{equation}{0}
\setcounter{lm}{0}
\setcounter{thm}{0}
\setcounter{rmk}{0}
\setcounter{df}{0}
\setcounter{cor}{0}

An interesting class in nonsmooth dynamical systems is so-called Filippov systems, sometimes called as piecewise smooth
discontinuous systems. It is used as a mathematical model in many fields
such as feedback systems in control systems \cite{ChL-FG, ChL-GF1},
power electronics \cite{MBCJ}, oscillators and dry frictions in mechanical engineering \cite{HCJL, PK} and so on.
Due to the switching surface, there are many novel
dynamical behaviors which do not exist in smooth systems, such as grazing and sliding motion
(see \cite{ChL-MG, ChL-YA}). The research of periodic orbits is one of important and challenging topics in
both smooth systems and Filippov systems. However, in Filippov systems there are
three classes of new periodic orbits which do not appear in smooth systems because of switching surfaces.
They are so-called crossing periodic orbits, grazing periodic orbits and sliding periodic orbits(see, e.g., \cite{ChL-DB, ChL-YA}).

Many models in applications are linear Filippov systems with a line of discontinuity, such as
the direct voltage control system of the buck converter(see \cite{CO, ChL-EE})
$$
\begin{aligned}
\left(
\begin{array}{c}
\dot x\\
\dot y
\end{array}
\right)=
\left(
\begin{array}{cc}
-a &1\\
-1 &-b
\end{array}
\right)
\left(
\begin{array}{c}
x\\
y
\end{array}
\right)+
\left\{
\begin{aligned}
&\left(
\begin{array}{c}
0\\
0
\end{array}
\right) &&{\rm if}~x>x_{ref},\\
& \left(
\begin{array}{c}
0\\
1
\end{array}
\right) &&{\rm if}~x<x_{ref},
\end{aligned}
\right.
\end{aligned}
$$
where $a>0, b>0$ are normalized parameters of the circuit, $x, y$ are the normalized load
voltage and impedance current, $x_{ref}$ is the desired normalized load voltage.
Another system
$$\ddot x+\alpha\dot x+\mu x{\rm sgn}\dot x+\beta x=0$$
models an unforced mechanical oscillator with dry friction $\mu x{\rm sgn}\dot x$ (see \cite{LT}),
where $\alpha\dot x$ and $\beta x$ denote respectively the viscous damping and restoring force.
This system is also written as a linear  Filippov system
$$
\begin{aligned}
\left(
\begin{array}{c}
\dot x\\
\dot y
\end{array}
\right)=
\left\{
\begin{aligned}
&\left(
\begin{array}{cc}
0 &1\\
-\beta-\mu &-\alpha
\end{array}
\right)\left(
\begin{array}{c}
x\\
y
\end{array}
\right) &&{\rm if}~y>0,\\
& \left(
\begin{array}{cc}
0 &1\\
-\beta+\mu &-\alpha
\end{array}
\right)\left(
\begin{array}{c}
x\\
y
\end{array}
\right) &&{\rm if}~y<0.
\end{aligned}
\right.
\end{aligned}
$$
Thus, the investigation of linear Filippov systems is one of important topics in the field of nonsmooth dynamical systems.

Generally, a linear Filippov system of dimension $2$ with a line of discontinuity is of form
\begin{eqnarray}
\left\{
\begin{aligned}
  \dot z&=A^+z+b^+~~~~&&{\rm if~}z\in\Sigma^+,\\
  \dot z&=A^-z+b^-~ ~&&{\rm if~}z\in\Sigma^-,
\end{aligned}
\right.
\label{PFS}
\end{eqnarray}
where $z:=(x, y)^\top\in\mathbb{R}^2$, $\Sigma^+:=\{z\in\mathbb{R}^2: H(z)>0\}, \Sigma^-:=\{z\in\mathbb{R}^2: H(z)<0\}$,
\begin{eqnarray*}
\begin{aligned}
&A^\pm:=\left(
\begin{array}{cc}
  a^\pm_{11}&a^\pm_{12}\\
  a^\pm_{21}&a^\pm_{22}\\
\end{array}
\right)\in \mathbb{R}^{2\times 2},~~~~b^\pm:=\left(
\begin{array}{c}
  b^\pm_1\\
  b^\pm_2\\
\end{array}
\right)\in \mathbb{R}^2.
\end{aligned}
\end{eqnarray*}
Here $H(z):=c\cdot z+d$, $c=(c_1,c_2)\in\mathbb{R}^2\backslash\{(0,0)\}$, $d\in\mathbb{R}$.
Let
$$\Sigma:=\{z\in\mathbb{R}^2: H(z)=0\}.$$
Sometimes $\Sigma$ is called the {\it switching line} or {\it discontinuity line} of system (\ref{PFS}).
For system (\ref{PFS}), solutions without points in $\Sigma$ are naturally determined by the vector filed $A^+z+b^+$ or $A^-z+b^-$. However, if a solution reaches $\Sigma$ at a time, then a new rule must be adopted to define its evolution. A widely use method is the so-called Filippov convention (see \cite{ChL-AF, ChL-YA}). In particular, as seen in \cite{ChL-YA}, $\Sigma$ is divided into the {\it crossing region}
$$\Sigma^c:=\{z\in \Sigma: ~(c\cdot(A^+z+b^+))(c\cdot(A^-z+b^-))>0\}$$
and the {\it sliding region}
$$\Sigma^s:=\{z\in\Sigma: ~(c\cdot(A^+z+b^+))(c\cdot(A^-z+b^-))\le0\}.$$
In addition,
$$
\begin{aligned}
\Sigma^s_a:&=\{z\in \Sigma^s:~c\cdot(A^+z+b^+)<0,~ c\cdot(A^-z+b^-)>0\},\\
\Sigma^s_r:&=\{z\in \Sigma^s:~c\cdot(A^+z+b^+)>0,~ c\cdot(A^-z+b^-)<0\}
\end{aligned}
$$
are called {\it attractive siding region} and {\it repulsive sliding region}, respectively.
In $\Sigma^c$, both two vector fields are transversal to $\Sigma$ and their normal components have same sign.
Thus the solution passing through a point in $\Sigma^c$ crosses $\Sigma$ at the point. In $\Sigma^s$, either their normal components have opposite sign or at least one of them vanishes. In this case, there exists the so-called {\it sliding solution}, which is the flow of a differential equation
\begin{eqnarray}
\dot z=F^s(z)~~~~~~~{\rm for}~~ z\in\Sigma^s.
\label{sliding}
\end{eqnarray}
For $z\in\Sigma^s$ satisfying $c\cdot(A^-z+b^-)\ne c\cdot(A^+z+b^+)$, $F^s(z)$ is given by
\begin{eqnarray}
F^s(z)=
\frac{c\cdot(A^-z+b^-)(A^+z+b^+)-c\cdot(A^+z+b^+)(A^-z+b^-)}
{c\cdot(A^-z+b^-)-c\cdot(A^+z+b^+)}
\label{sliding0}
\end{eqnarray}
However, for $z\in\Sigma^s$ satisfying $c\cdot(A^-z+b^-)=c\cdot(A^+z+b^+)$, which termed as a {\it singular sliding point},
$F^s(z)$ either is defined by extending (\ref{sliding0}) if an extension is possible or is defined by $0$ (see \cite{ChL-YA}).
Usually, $F^s(z)$ is called the {\it sliding vector field} and its equilibria are called {\it pseudo-equilibria} of system (\ref{PFS}).
In conclusion, the solution of system (\ref{PFS}) can be defined by concatenating the flows of $A^+z+b^+, A^-z+b^-$ and $F^s(z)$.
A precise description is given in \cite[p.2160]{ChL-YA}, where both forward and backward solutions are defined uniquely. Although that,
the invertibility in the classical sense does not hold for system (\ref{PFS}) because its orbits can overlap.

Besides, from \cite{ChL-YA} a point in the boundary of $\Sigma^s$ is either a {\it boundary equilibrium} where one of the vector fields $A^+z+b^+$ and $A^-z+b^-$ vanishes, or a {\it tangency point} where $A^+z+b^+$ and $A^-z+b^-$ do not vanish and at least one of them is tangent to $\Sigma$. Note that
if a tangency point is of $A^+z+b^+$ and $A^-z+b^-$, then it is a singular sliding point. Moreover, a tangency point $q$
of the vector field $A^+z+b^+$ is {\it visible} (resp. {\it invisible}) if the orbit of $A^+z+b^+$ passing through $q$ at time $t_q$ stays in $\overline{\Sigma^+}$ (resp. $\overline{\Sigma^-}$) for small $|t-t_q|$. An equilibrium of $A^+z+b^+$ is {\it admissible} (resp. {\it virtual}) if it lies in $\Sigma^+$ (resp. $\Sigma^-$). For the vector field $A^-z+b^-$, the visibility of tangency points, the admissibility and virtuality of equilibria can be defined similarly.

According to \cite{ChL-YA}, a periodic orbit lying entirely in $\Sigma^+$ or $\Sigma^-$ is said to be a {\it standard periodic orbit}. Moreover, a periodic orbit is said to be a {\it sliding periodic orbit} if it has a sliding segment in $\Sigma$, and {\it crossing periodic orbit} if it has only isolated points in $\Sigma$. Notice that a sliding periodic orbit forward (resp. backward) in time can be not
a periodic orbit backward (resp. forward) in time, because the invertibility in the classical sense does not hold for system (\ref{PFS}) as indicated above.

The investigation of periodic orbits of system (\ref{PFS}) can be traced back to 1930s (see \cite{ChL-AA}).
When $c_1=1$, $c_2<0$, $d=0$ and
$$A^+=A^-=\left(
\begin{array}{cc}
  0&1\\
  0&-1\\
\end{array}
\right),~~~~~~
b^+=-b^-=\left(
\begin{array}{c}
  0\\
  -1\\
\end{array}
\right),$$
the existence of crossing periodic orbits was proved in \cite{ChL-AA}. When $c_1=1, d=0$ and
$$A^+=A^-=\left(
\begin{array}{cc}
  0&1\\
  -a_1&-2a_2\\
\end{array}
\right),~~~~~~
b^+=-b^-=\left(
\begin{array}{c}
  0\\
  -b_2\\
\end{array}
\right)$$
with positive $a_1, a_2, b_2$, it was proved in \cite{ChL-BH} that there exist (resp. exists no) asymptotically
stable crossing periodic orbits if $c_2<0$ (resp. $c_2\ge0$).
As to sliding periodic orbits, the existence and number were researched under the condition $A^+=A^-$ and $b^+=-b^-$ in
\cite{ChL-FG, ChL-GF1, ChL-PK}. It was proved in \cite{ChL-PK} that (\ref{PFS}) has no sliding periodic orbits
if the two eigenvalues of $A^+$ are either real or pure imaginary.
The number of sliding periodic orbits and the number of crossing periodic orbits were proved to be at most
$2$ separately in \cite{ChL-FG} when $0<({\rm tr(A^+)})^2<4{\rm det} A^+$. Moreover,
the number of crossing periodic orbits is at most $1$ when there are two sliding periodic orbits.
It was proved in \cite{ChL-GF1} that (\ref{PFS}) has no sliding periodic orbits
and at most one crossing periodic orbit if ${\rm det} A^+<0$.

By the transformation $z\to B(z+\nu)$, switching line $\Sigma$ can be transformed as $y$-axis, where $\nu=(-d, 0)^\top$ and
$$
B=\left\{
\begin{aligned}
&\left(
\begin{array}{cc}
  1/c_1&-c_2/c_1\\
  0&1\\
\end{array}
\right)~~~~~&&{\rm if}~~ c_1\ne0,\\
&\left(
\begin{array}{cc}
  0&~1\\
  1/c_2&0\\
\end{array}
\right)~~~~~&&{\rm if} ~~c_1=0.
\end{aligned}
\right.$$
Thus, without loss of generality we always consider system (\ref{PFS}) with $y$-axis as the switching line, i.e.,
\begin{eqnarray}
\left\{
\begin{aligned}
  \dot z&=A^+z+b^+~~~~&&{\rm if~}x>0,\\
  \dot z&=A^-z+b^-~ ~&&{\rm if~}x<0.
\end{aligned}
\right.
\label{DPWL}
\end{eqnarray}
We say that system (\ref{DPWL}) is {\it nondegenerate} if $A^{\pm}$ are both nondegenerate.
Besides, $\dot z=A^+z+b^+$ (resp. $\dot z=A^-z+b^-$) is called the {\it left system} (resp. {\it right system}) of
(\ref{DPWL}). For nondegenerate system (\ref{DPWL}), we denote the unique equilibrium of the left system
(resp. right system) by $E_L$ (resp. $E_R$).

Since standard periodic orbits of system (\ref{DPWL}) appear only when either $E_L$ or $E_R$ is a center,
it is trivial to study standard periodic orbits.
Thus, in this paper we consider the crossing periodic orbits and sliding ones of system (\ref{DPWL}).
When system (\ref{DPWL}) satisfies
$$A^\pm=\left(
\begin{array}{cc}
  0&1\\
  a^\pm_1&a^\pm_2\\
\end{array}
\right),~~~~a^\pm_1>0,~~~~~
b^+=-b^-=\left(
\begin{array}{c}
  b_1\\
  -b_2\\
\end{array}
\right),$$
it was proved in \cite{ChL-SS} that the numbers of sliding periodic orbits and crossing ones are
$0$ and at least $1$ respectively if $|2b_2+(a^+_2+a^-_2)b_1|/|a^+_2-a^-_2|>|b_1|$.
In recent years, many researchers are interested in the maximum number of crossing periodic orbits of system (\ref{DPWL}).
In 2010, it was conjectured in \cite{ChL-HZ} that system (\ref{DPWL}) has at most two crossing periodic orbits.
A negative answer to this conjecture was given in 2012
by an example with three crossing periodic orbits in \cite{ChL-SX} and lately more systems with three crossing periodic orbits
were found in \cite{ChL-DC, ChL-CCJ, ChL-EE1, ChL-FEEE, ChL-SX, ChL-LLP, ChL-JLDM, ChL-JLDM1, ChL-LE}.
But the maximal number of crossing periodic orbits of (\ref{DPWL}) is still unknown. More results on crossing periodic orbits
refer to \cite{ChL-EE, ChL-SX1, ChL-SX2, ChL-ZXL, ChL-HLH1, ChL-HLH}. On the other hand, for system (\ref{DPWL}) we lack the information about
sliding periodic orbits and an interesting problem is {\it what is the maximum number of sliding periodic orbits
and their configuration}. Another interesting problem is {\it what about the coexistence of sliding periodic
orbits and crossing ones}.

Motivated by these interesting problems, in this paper we study the number and configuration of sliding periodic orbits
of system (\ref{DPWL}), the coexistence of sliding periodic orbits and crossing ones.
As introduced above, these problems have been researched in \cite{ChL-FG, ChL-GF1, ChL-PK} when $A^+=A^-$ and $b^+=-b^-$.
Therefore, in present paper we consider the more general system (\ref{DPWL}).
The following theorems are our main results.

\begin{thm}
The number of sliding periodic orbits for nondegenerate system {\rm(\ref{DPWL})} is at most $2$. In particular,
\begin{description}
\setlength{\itemsep}{0mm}
\item[]{\rm(i)} if {\rm(\ref{DPWL})} has a unique sliding periodic orbit, then the configuration of this orbit is one of {\rm Figure \ref{1A}(a)-(d)} in the sense of $\Sigma$-equivalence and time reversing;
\item[]{\rm(ii)} if {\rm(\ref{DPWL})} has exactly $2$ sliding periodic orbits, then the configuration of these two orbits is one of {\rm Figure \ref{2A}(a)-(c)} in the sense of $\Sigma$-equivalence and time reversing.
\end{description}
\label{typeAnumber}
\end{thm}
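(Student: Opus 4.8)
The plan is to reduce the analysis of sliding periodic orbits to a finite enumeration of excursion itineraries anchored at the tangency points on $\Sigma$, and then to count and classify them. First I would exploit the normalization $\Sigma=\{x=0\}$: on the switching line the transversal components of the two fields are the affine functions $f^+(y):=a^+_{12}y+b^+_1$ and $f^-(y):=a^-_{12}y+b^-_1$, each having at most one zero. Hence $\Sigma$ carries at most two tangency points (one where $f^+=0$, one where $f^-=0$), the sliding set $\Sigma^s=\{f^+f^-\le 0\}$ is either a single interval or ray or a pair of rays (one in $\Sigma^s_a$, one in $\Sigma^s_r$) separated by $\Sigma^c$, and its relative boundary in $\Sigma$ consists precisely of these tangency points. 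Writing $g^\pm(y):=a^\pm_{22}y+b^\pm_2$ for the tangential components, the $y$-component of $F^s$ from (\ref{sliding0}) is the rational function $(f^-g^+-f^+g^-)/(f^--f^+)$, whose numerator is quadratic; thus there are at most two pseudo-equilibria, which cut $\Sigma^s$ into at most three arcs on which the sliding flow is strictly monotone.

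Next I would pin down the coarse structure of any sliding periodic orbit $\Gamma$. Because the sliding flow is one-dimensional and strictly monotone away from pseudo-equilibria, a sliding segment of $\Gamma$ cannot contain a pseudo-equilibrium in its interior and must run monotonically to a relative-boundary point of $\Sigma^s$, that is, to a tangency point, where $\Gamma$ leaves $\Sigma$ into $\overline{\Sigma^+}$ or $\overline{\Sigma^-}$ along the corresponding visible tangency. Thus $\Gamma$ is built from sliding segments alternating with excursion arcs, each excursion departing at one of the at most two tangency points and governed, inside each half-plane, by the explicit linear flow of the left or right system; I would also show that an excursion crosses $\Sigma$ transversally in $\Sigma^c$ at most once before re-entering $\Sigma^s$. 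For each half-plane I would then define the transition map along $\Sigma$ determined by the linear flow, its form dictated by whether $E_L$, $E_R$ is a focus, node, saddle or center, and record its monotonicity and the position of its domain relative to the tangency points. The central point is that once the exit tangency and the half-plane itinerary are fixed, the excursion arc, hence its return point on $\Sigma$, is completely determined by the flow; periodicity then reduces to the single consistency requirement that this return point lie in $\Sigma^s$ on the correct side, so that the sliding flow carries it back to the exit tangency.

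With this reduction, closing up $\Gamma$ admits only finitely many itineraries: a single excursion issuing from the $f^+$-tangency or from the $f^-$-tangency, or a double excursion visiting both tangencies, each in the half-plane fixed by visibility. I would run the count over these itineraries, using (i) the at-most-two tangency points, (ii) the monotonicity of the sliding flow on each of the at most three arcs cut out by pseudo-equilibria, and (iii) the monotonicity of the half-plane transition maps, to show that at most two candidate configurations are realizable simultaneously, giving the bound $2$. The classification in parts (i) and (ii) of the statement would then follow by casework on the spectral types of $E_L$ and $E_R$ and on the slopes and visibilities governing $f^\pm$, collapsing the admissible pictures, modulo $\Sigma$-equivalence and time reversing, to those drawn in Figures \ref{1A} and \ref{2A}.

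The hard part will be the simultaneous-realizability count: showing that the consistency conditions for two distinct itineraries can hold together while a third is always excluded. This is where quantitative input is unavoidable, since one must compare the determined return points of the competing excursions against the monotone sliding flow and against one another to rule out triple coexistence. I expect this to require the convexity and monotonicity properties of the linear transition maps, together with explicit control of the focus return map in the complex-eigenvalue case, rather than purely topological bookkeeping, and to be accompanied by a lengthy but essentially routine case distinction over the types of $A^\pm$ in order to assemble the final list of configurations.
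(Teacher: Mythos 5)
Your central counting device --- every sliding periodic orbit must exit $\Sigma^s$ through a visible tangency point, there are at most two such points (the zeros of the affine transversal components $f^\pm$), and the forward orbit issuing from each is unique --- is exactly the mechanism the paper uses to obtain the bound $2$, so on that score you and the paper agree. Where you diverge is in how the classification is organized, and in where you locate the difficulty. The paper first proves (Lemmas \ref{lmmm} and \ref{suff}) that the mere existence of an excursion returning to $\Sigma$ from a tangency point forces the adjacent equilibrium to be an admissible focus (Poincar\'e--Bendixson applied to the region bounded by the arc and $\Sigma$, plus the elementary fact that only a focus sends the tangency orbit back to the $y$-axis at a different point); it then constructs a genuinely smooth $\Sigma$-equivalence to the canonical form (\ref{dsgsh}) --- smoothness matters here, since the merely continuous Freire--Ponce--Torres normalization destroys sliding orbits --- and reads off all configurations from the six sign-patterns of $(\delta,\eta)$ in Figure \ref{csl}. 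Your plan of direct casework over all spectral types of $E_L,E_R$ with transition maps would reach the same place, but only after rediscovering case by case that nodes, saddles and centers contribute nothing; isolating the focus lemma up front is what keeps the paper's enumeration down to four single-orbit and three two-orbit pictures. Finally, you have misjudged the ``hard part'': no quantitative comparison of return maps is needed to exclude a third sliding periodic orbit, because two distinct sliding periodic orbits must be anchored at distinct tangency points by forward uniqueness, and there are only two anchors. The convexity and monotonicity of the half-plane Poincar\'e maps that you flag as unavoidable are indeed used in the paper, but for Theorem \ref{AC} (coexistence with crossing periodic orbits), not for this statement. One small point to tighten: the relative boundary of $\Sigma^s$ may contain boundary equilibria as well as tangency points; since a boundary equilibrium is a zero of the sliding vector field it cannot be reached in finite time, so your claim that sliding segments terminate at tangency points survives, but this case should be addressed explicitly.
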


\begin{figure}
  \begin{minipage}[t]{0.20\linewidth}
  \centering
  \includegraphics[width=1.48in]{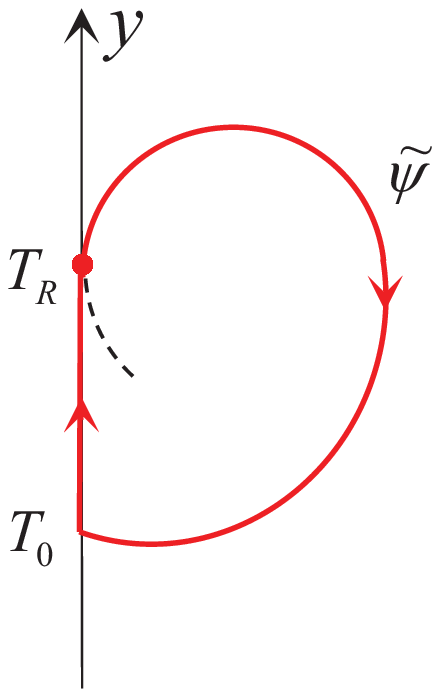}
  \caption*{~(a)}
  \end{minipage}
  ~~~~~~~~
  \begin{minipage}[t]{0.20\linewidth}
  \centering
  \includegraphics[width=1.50in]{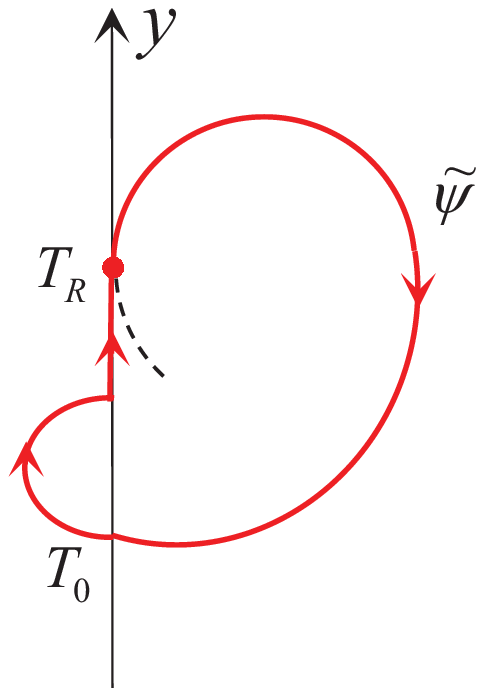}
  \caption*{~(b)}
  \end{minipage}
  ~~~~~~~~
  \begin{minipage}[t]{0.20\linewidth}
  \centering
  \includegraphics[width=1.50in]{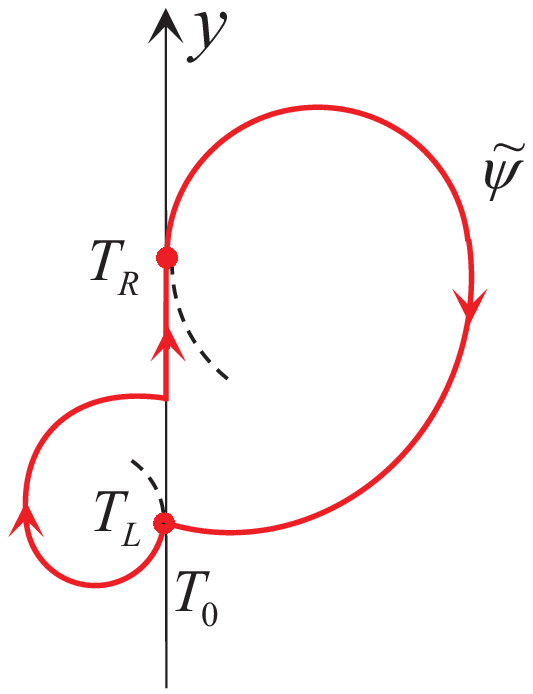}
  \caption*{~~(c)}
  \end{minipage}
  ~~~~~~~~~
  \begin{minipage}[t]{0.20\linewidth}
  \centering
  \includegraphics[width=1.35in]{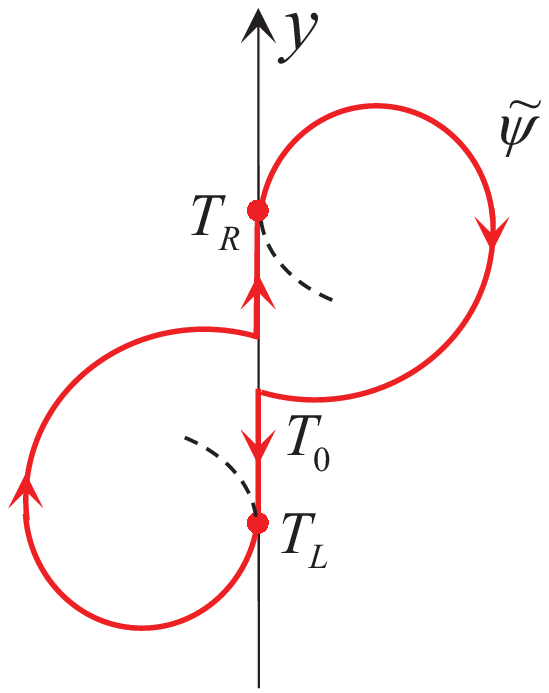}
  \caption*{~(d)}
  \end{minipage}
\caption{{\small Configurations of one sliding periodic orbit}}
\label{1A}
\end{figure}

\begin{figure}
~~~~~~~~
  \begin{minipage}[t]{0.25\linewidth}
  \centering
  \includegraphics[width=1.485in]{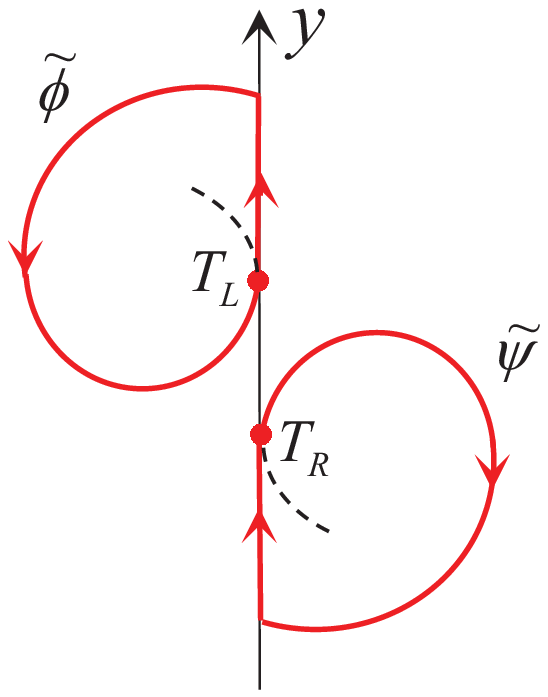}
  \caption*{(a)}
  \end{minipage}
  \~~~~~~~~~
  \begin{minipage}[t]{0.25\linewidth}
  \centering
  \includegraphics[width=1.420in]{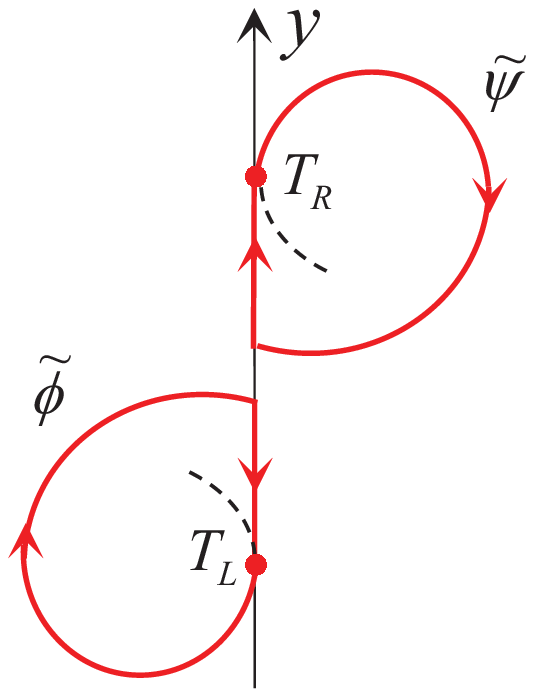}
  \caption*{(b)}
  \end{minipage}
   ~~~~~~~
  \begin{minipage}[t]{0.25\linewidth}
  \centering
  \includegraphics[width=1.50in]{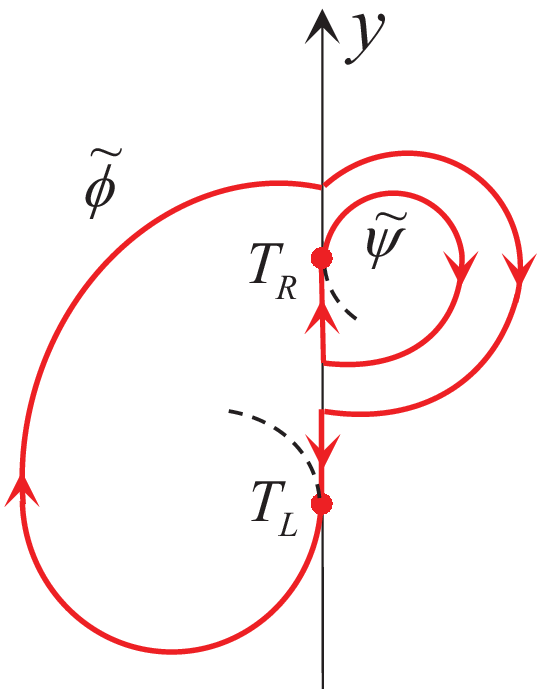}
  \caption*{(c)}
  \end{minipage}
\caption{{\small Configurations of two sliding periodic orbits}}
\label{2A}
\end{figure}

For nondegenerate system (\ref{DPWL}), we give the number and configuration of sliding periodic orbits in Theorem \ref{typeAnumber}.
The existence of all configurations for sliding periodic orbits are shown by examples in Section 4.

Two Filippov systems are {\it $\Sigma$-equivalent} \cite[Definition 2.20]{ChL-MG} if there exists an orientation preserving homeomorphism that maps the orbits and switching boundaries of the first system onto orbits and switching boundaries of the second one. Therefore, Theorem~\ref{typeAnumber} means that by an orientation preserving homeomorphism and time reversing(if necessary), system (\ref{DPWL}) with one (resp. two) sliding periodic orbit can be changed into a system of form (\ref{DPWL}) having one (resp. two) sliding periodic orbit shown in one of {\rm Figure \ref{1A}(a)-(d)} (resp. Figure \ref{2A}(a)-(c)).

Note that system (\ref{DPWL}) with a sliding periodic orbit shown in Figure~\ref{1A}(c) is not structurally stable and
undergoes the so-called {\it simple sliding bifurcation} (see \cite{ChL-YA}) under perturbation, i.e., the sliding periodic orbit shown in Figure~\ref{1A}(c) is replaced by a sliding periodic orbit shown in Figure~\ref{1A}(b) or Figure~\ref{1A}(d).

In order to study the total number of periodic orbits, we give the following theorems on the coexistence of
sliding periodic orbits and crossing ones.

\begin{thm}
For  system {\rm(\ref{DPWL})}, the following statements hold.
\begin{description}
\setlength{\itemsep}{0mm}
\item[]{\rm(i)} If {\rm(\ref{DPWL})} has two sliding periodic orbits shown in {\rm Figure~\ref{2A}(a)},
then there exist no crossing periodic orbits.
\item[]{\rm (ii)} If {\rm(\ref{DPWL})} has either two sliding periodic orbits shown in one of {\rm Figure~\ref{2A}(b)(c)} or a unique sliding periodic orbit shown in one of {\rm Figure~\ref{1A}(c)(d)}, then there exists a unique crossing periodic orbit, which is unstable.
\item[]{\rm(iii)} There exists a system of form {\rm (\ref{DPWL})} having one sliding periodic orbit shown in {\rm Figure~\ref{1A}(a)} and
 two crossing periodic orbits.
 \item[]{\rm(iv)}There exists a system of form {\rm (\ref{DPWL})} having one sliding periodic orbit shown in {\rm Figure~\ref{1A}(b)} and
 two crossing periodic orbits.
 \end{description}
\label{AC}
\end{thm}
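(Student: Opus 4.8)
The plan is to reduce the counting of crossing periodic orbits to the study of fixed points of a Poincar\'e return map on the crossing region of $\Sigma$, and then to feed into that analysis the parameter restrictions that each prescribed sliding configuration forces through Theorem~\ref{typeAnumber}. First I would set up the half-return maps: parametrize $\Sigma$ (the $y$-axis) by the ordinate $y$, and observe that the first components of $A^+z+b^+$ and $A^-z+b^-$ along $\Sigma$ are the affine functions $a^+_{12}y+b^+_1$ and $a^-_{12}y+b^-_1$, whose zeros are the tangency points and whose signs cut out the crossing subregions $\Sigma^c_\uparrow$ (both fields pointing into $x>0$) and $\Sigma^c_\downarrow$ (both pointing into $x<0$). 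On $\Sigma^c_\uparrow$ I define $P^+$ as the first return to $\Sigma$ along the orbit of $\dot z=A^+z+b^+$, and on $\Sigma^c_\downarrow$ I define $P^-$ along $\dot z=A^-z+b^-$. A crossing periodic orbit corresponds exactly to a fixed point of the composition $R:=P^-\circ P^+$ on $\Sigma^c_\uparrow$, so all four assertions become statements about the zeros of the displacement $\delta(y):=R(y)-y$ and about the value of $R'$ at such a zero.

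Second, for items (i) and (ii), I would read off from the proof of Theorem~\ref{typeAnumber} the data attached to each configuration: the type of each equilibrium $E_L,E_R$ (focus, node or saddle, and its stability), whether it is admissible or virtual, and the ordinates of the tangency points that bound $\Sigma^c_\uparrow$ and $\Sigma^c_\downarrow$. The half-maps are then analytic on the relevant intervals, and the essential analytic input is the convexity of the half-Poincar\'e map of a linear focus: in the chosen parametrization the graph of each $P^\pm$ has a definite sign of curvature, and $P^\pm$ extends continuously to the bounding tangency point as a fixed point (the grazing orbit). From these facts I expect $\delta$ to keep a single sign throughout the crossing region in the configuration of Figure~\ref{2A}(a), yielding (i), while in the configurations of Figure~\ref{2A}(b)(c) and Figure~\ref{1A}(c)(d) the boundary values of $\delta$ have opposite signs and convexity forbids more than one zero, yielding the unique crossing orbit of (ii). For its stability I would evaluate $R'(\bar y)=(P^-)'(P^+(\bar y))\,(P^+)'(\bar y)$ at the fixed point $\bar y$; since a focus half-map scales distances by a factor governed by the ratio of the real part to the imaginary part of the eigenvalues across the traversed angle, the same convexity/boundary information forces $R'(\bar y)>1$. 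Equivalently, the crossing orbit is the boundary of the basin of the attracting sliding periodic orbit(s) present in these configurations, hence repelling from outside and so unstable.

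Third, for the existence statements (iii) and (iv) I would exhibit explicit systems. Starting from a pair of foci whose half-maps are tuned so that $\delta$ has two simple zeros, giving two crossing periodic orbits, I would then adjust the vectors $b^\pm$ and the positions of the tangency points so that one attractive sliding segment closes up into a sliding periodic orbit of the type in Figure~\ref{1A}(a) (resp.\ Figure~\ref{1A}(b)) while the two crossing orbits persist. The verification is a finite computation of the two half-maps together with the sliding vector field $F^s$ from (\ref{sliding0}), checking the signs that identify the sliding configuration and the two transversal fixed points of $R$.

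The hardest part will be the return-map analysis underlying (i) and (ii): pinning down the exact sign and monotonicity of $\delta$ on each crossing subinterval requires combining the convexity of the focus half-maps with the precise behavior at the tangency points, and one must treat separately the sub-cases (a focus versus a node or saddle in the opposite half-plane, and visible versus invisible tangencies) that Theorem~\ref{typeAnumber} collapses into a single figure, since for a node or saddle the half-map is defined only on part of the interval. A secondary difficulty is matching the examples of (iii) and (iv) to \emph{exactly} the configurations Figure~\ref{1A}(a) and Figure~\ref{1A}(b) rather than a neighbouring one, which I would handle by verifying the distinguishing signs isolated in the proof of Theorem~\ref{typeAnumber}.
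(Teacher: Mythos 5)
Your overall strategy for items (i)--(iii) is the same as the paper's: reduce to a canonical form, build the two half-Poincar\'e maps on $\Sigma$, and control the zeros of a displacement function via convexity and boundary values; your instability criterion and your continuity/tuning argument for (iii) also match what the paper does. But there are two concrete gaps. First, you work with the composed return map $R=P^-\circ P^+$ and claim that convexity of each half-map ``forbids more than one zero'' of $\delta(y)=R(y)-y$. Convexity does not survive composition (each half-map is decreasing, and the curvature of $P^-\circ P^+$ involves both $(P^-)''$ and $(P^+)''$ with coefficients of indefinite sign), so this step fails as stated. The paper avoids the composition entirely: it compares the two half-maps as graphs over the \emph{same} coordinate, setting $D(y)=P_L^{-1}(y)-P_R(y)$, and proves $\left(P_L^{-1}\right)''>0$ and $P_R''<0$ separately (after a shear normalizing $\gamma_1=\gamma_3$ so the left flow has a usable closed form), whence $D''>0$; together with $D(y^*)<0$ and $D\to+\infty$ this gives exactly one zero. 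You would need to reformulate your argument this way. A smaller point: for (i) no displacement analysis is needed at all --- in the configuration of Figure~\ref{2A}(a) the vector field on the whole crossing region points to one side, so no orbit can cross $\Sigma$ in both directions and a crossing periodic orbit is impossible (your $P^-$ has empty domain).

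The more serious gap is item (iv). A sliding periodic orbit of type Figure~\ref{1A}(b) coexisting with two crossing periodic orbits is a configuration of three nested periodic orbits, and it lives in the regime where one focus is stable and the other unstable, so $D''$ has no definite sign and the convexity machinery gives no bound of ``at most two.'' Producing it is essentially as hard as exhibiting three nested crossing limit cycles; the paper does not construct it from scratch but starts from a Freire--Ponce--Torres system known to have two structurally stable crossing cycles plus a critical crossing cycle through the tangency point, and then runs the crossing--sliding bifurcation in $\eta$ to convert the critical cycle into the required sliding periodic orbit while the other two persist by structural stability. Your plan (``tune the half-maps so $\delta$ has two simple zeros \ldots\ verification is a finite computation'') identifies no mechanism for producing the second crossing cycle in this regime and so does not yet constitute a proof of (iv).
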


This paper is organized as follows. In Section 2, after introducing a canonical form of system (\ref{DPWL})
which captures the sliding periodic orbits and crossing ones, we provide the proof of Theorem~\ref{typeAnumber}. In Section 3, we give the proof of Theorem~\ref{AC}. Lastly, some concluding remarks are given in Section 4 to end this paper.

\section{Proof of Theorem~\ref{typeAnumber}}
\setcounter{equation}{0}
\setcounter{lm}{0}
\setcounter{thm}{0}
\setcounter{rmk}{0}
\setcounter{df}{0}
\setcounter{cor}{0}

In order to prove Theorem~\ref{typeAnumber}, we give some lemmas in the following.
\begin{lm}
Assume that $(x(t), y(t))^\top$ is the unique solution of the Cauchy problem
\begin{eqnarray}
\left(
\begin{array}{c}
\dot x\\
\dot y\\
\end{array}
\right)
=\left(
\begin{array}{cc}
  a_{11}&a_{12}\\
  a_{21}&a_{22}\\
\end{array}
\right)
\left(
\begin{array}{c}
x\\
y\\
\end{array}
\right)+
\left(
\begin{array}{c}
  b_1\\
  b_2\\
\end{array}
\right)
\label{gfkl}
\end{eqnarray}
with $(x(0), y(0))^\top=(0, -b_1/a_{12})^\top$, where $a_{12}\neq0, a_{11}a_{22}-a_{12}a_{21}\neq0, b_2a_{12}-a_{22}b_1\neq0$.
\\
{\rm (i)} There exists a $t_0\ne 0$ such that $x(t_0)=0, y(t_0)=-b_1/a_{12}$ if and only if the unique equilibrium of
{\rm (\ref{gfkl})} is a center;
\\
{\rm (ii)} There exists a $t_0>0$ {\rm(}resp. $<0${\rm)} such that $x(t_0)=0, y(t_0)\ne -b_1/a_{12}$ if and only if the unique equilibrium of
{\rm (\ref{gfkl})} is an unstable {\rm(}resp. a stable{\rm)} focus.
\label{lmmm}
\end{lm}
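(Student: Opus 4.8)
The plan is to reduce both statements to a count of the zeros of the single scalar function $t\mapsto x(t)$, organized according to the type of the unique equilibrium $E=-A^{-1}b$ of (\ref{gfkl}). First I would record the geometric meaning of the initial point $p_0:=(0,-b_1/a_{12})^\top$. A direct computation gives $\dot x(0)=a_{11}\cdot 0+a_{12}(-b_1/a_{12})+b_1=0$ and $\ddot x(0)=a_{12}\dot y(0)=b_2a_{12}-a_{22}b_1$, so the hypothesis $b_2a_{12}-a_{22}b_1\neq0$ says exactly that the orbit through $p_0$ is \emph{quadratically} tangent to the line $x=0$ at $p_0$. The same quantity shows that $E$ lies off the line, since its first coordinate is $E_x=(b_2a_{12}-a_{22}b_1)/(a_{11}a_{22}-a_{12}a_{21})\neq0$. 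Thus every assertion about returning to, or crossing, the line $x=0$ becomes an assertion about the solutions $t$ of $x(t)=0$, with $t=0$ always present as a \emph{tangential} (double) zero.

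Next I would translate to the equilibrium by writing $g(t):=x(t)-E_x$, so that $x(t)=0$ reads $g(t)=-E_x$, and split into the eigenvalue regimes of $A$ (note that $\det A\neq0$ forbids a zero eigenvalue and $a_{12}\neq0$ forbids $A=\lambda I$). For \textbf{real eigenvalues} (nodes, saddles, improper nodes) the function $g$ is a combination of real exponentials, possibly times a linear factor, so its derivative $g'$ has at most one zero; since tangency forces $g'(0)=0$, that is the \emph{only} critical point, $g$ is strictly monotone on each side of $0$, and $g(t)=g(0)$ holds only at $t=0$. Hence there is no $t_0\neq0$ whatsoever, consistent with the equilibrium being neither a center nor a focus. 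For \textbf{pure imaginary eigenvalues} (center) the orbit through $p_0$ is a closed convex curve tangent to the line, so it returns to $p_0$ after one period, producing the time $t_0\neq0$ of part~(i); and tangency of a convex closed curve to a line forces the orbit to meet that line only at $p_0$, which rules out the alternative in~(ii).

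The substantive case is a \textbf{focus}, where I would use the normal form $x(t)=E_x+e^{\alpha t}M\cos(\beta t-\phi)$ with $M>0$, $\beta>0$, and $\alpha=\mathrm{Re}\,\lambda$; the tangency condition $\dot x(0)=0$ fixes the phase through $\tan\phi=-\alpha/\beta$, and $x(0)=0$ gives $M\cos\phi=-E_x$. Setting $F(t):=e^{\alpha t}\cos(\beta t-\phi)$, the equation $x(t)=0$ becomes $F(t)=\cos\phi$. The key computation is that the critical points of $F$ are exactly $t_n=n\pi/\beta$, with the geometrically scaled, sign-alternating critical values
\[
F(t_n)=(-1)^n e^{(\alpha/\beta)n\pi}\cos\phi,\qquad n\in\mathbb{Z},
\]
so that $t_0=0$ is a local extremum sitting at the target level $\cos\phi$, and $F$ is monotone between consecutive $t_n$. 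The same-sign extrema (even $n$) have magnitude $e^{(\alpha/\beta)n\pi}|\cos\phi|$, which grows for $t_n>0$ and shrinks for $t_n<0$ when $\alpha>0$; hence the graph of $F$ overshoots the level $\cos\phi$ at the first even-$n$ extremum with $t>0$ and stays short of it for all $t<0$, giving exactly one transversal zero of $x$ at some $t_0>0$ and none at $t<0$. The case $\alpha<0$ is the mirror image under $t\mapsto-t$. Because these extra zeros are transversal ($F'\neq0$), they occur at points with $y\neq-b_1/a_{12}$, which simultaneously yields the crossing in~(ii) and shows that a focus never returns to $p_0$, completing the ``only if'' direction of~(i).

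I expect the focus analysis to be the main obstacle: one must verify that the sign of $\cos\phi$ (equivalently of $E_x$) does not affect the conclusion and that no earlier zero is overlooked. This is precisely what the explicit location of the extrema $t_n$ together with the monotonicity of $F$ between them provides, so the whole argument hinges on identifying $F$ and its geometrically spaced critical values rather than on any delicate estimate.
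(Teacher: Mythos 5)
Your argument is correct. The paper gives no proof of this lemma at all (it is declared ``elementary'' and neglected), so there is nothing to compare against; your reduction to counting zeros of the scalar function $t\mapsto x(t)$ --- using $\dot x(0)=0$, $\ddot x(0)=b_2a_{12}-a_{22}b_1\neq 0$, monotonicity of $F$ between the critical times $t_n=n\pi/\beta$, and the critical values $F(t_n)=(-1)^n e^{(\alpha/\beta)n\pi}\cos\phi$ --- is exactly the elementary computation the authors presumably have in mind, and your case split covers every eigenvalue configuration compatible with $a_{12}\neq0$ and $\det A\neq0$. The one step worth writing out explicitly in a final version is that at any $t_0$ with $x(t_0)=0$ one has $\dot x(t_0)=a_{12}y(t_0)+b_1$, so transversality of the zero of $x$ is equivalent to $y(t_0)\neq -b_1/a_{12}$; this is the bridge between your analysis of $F$ and the exact wording of conclusions (i) and (ii).
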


The proof of Lemma~\ref{lmmm} is elementary and it is neglected.

\begin{lm}
If nondegenerate system {\rm(\ref{DPWL})} has a sliding periodic orbit $\psi$, then $a^+_{12}b_1^-\ne a^-_{12}b^+_1$ and
at least one of $E_L$ and $E_R$ is an admissible focus.
Additionally, the admissible focus is unstable {\rm(}resp. stable{\rm)} when $\psi\cap\Sigma^s_r=\emptyset$
{\rm(}resp. $\psi\cap\Sigma^s_a=\emptyset${\rm)}.
\label{suff}
\end{lm}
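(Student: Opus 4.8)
The plan is to read off the local structure of $\psi$ on and near the switching line $\Sigma=\{x=0\}$ and then feed the return arc into Lemma~\ref{lmmm}. I would parametrize $\Sigma$ by $y$ and record the normal components $g^+(y):=a^+_{12}y+b^+_1$ and $g^-(y):=a^-_{12}y+b^-_1$ of the two vector fields; the tangency points of the left and right systems on $\Sigma$ are the zeros $(0,-b^+_1/a^+_{12})$ and $(0,-b^-_1/a^-_{12})$, which are exactly the initial data appearing in Lemma~\ref{lmmm}. By definition $\psi$ contains a nondegenerate sliding segment $\gamma\subset\Sigma^s$, and $\psi$ cannot lie entirely in $\Sigma$: on the line $\Sigma^s$ the sliding field $F^s$ is a scalar monotone flow with no periodic orbit, so $\psi$ must leave $\Sigma$ along at least one arc lying in $\Sigma^+$ or $\Sigma^-$. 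Say this arc lies in $\Sigma^+$, so it is an orbit of the left system $A^+z+b^+$ that departs from $\Sigma$ and returns to it.

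The heart of the argument is that such a return forces a focus. The arc touches $\Sigma$ tangentially at one end and crosses it transversally at the other; the tangential end is necessarily the tangency point $(0,-b^+_1/a^+_{12})$ of the left system, so in particular $a^+_{12}\ne0$, and the arc is exactly the orbit of (\ref{gfkl}) (with the coefficients of the left system) issued from that point, returning to $\Sigma$ at the transversal end. Lemma~\ref{lmmm} then applies verbatim: were the return to the same point the equilibrium would be a center and $\psi$ would carry no nondegenerate sliding segment, so the return is at a different point and Lemma~\ref{lmmm}(ii) makes $E_L$ a focus. Because the whole arc stays in $\overline{\Sigma^+}$ and is internally tangent to $\Sigma$, it must encircle its focus, whence $E_L\in\Sigma^+$, i.e.\ $E_L$ is admissible. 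The symmetric discussion for an arc in $\Sigma^-$ yields $E_R$; since $\psi$ has at least one arc, at least one of $E_L,E_R$ is an admissible focus.

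For the non-coincidence $a^+_{12}b^-_1\ne a^-_{12}b^+_1$ I would argue by contradiction. This equality is precisely the statement that the two tangency points coincide (or, when some $a^\pm_{12}=0$, that a field is everywhere transversal or everywhere tangent to $\Sigma$). In each such degenerate case the common point is a singular sliding point and $\Sigma^s$ collapses either to a single point or to all of $\Sigma$ with no visible exit tangency, so a nondegenerate sliding segment bordered by an exit point cannot exist, contradicting the first step. Hence $a^+_{12}b^-_1\ne a^-_{12}b^+_1$.

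Finally, the stability dichotomy comes from matching the time orientation of the return arc with the sign of $t_0$ in Lemma~\ref{lmmm}(ii). If $\psi\cap\Sigma^s_r=\emptyset$, the sliding segment lies in the attractive region, the Filippov flow is forward-time contracting onto $\Sigma$, and $\psi$ is traversed in forward time: the arc solves the left system forward from its tangency and returns at $t_0>0$, so $E_L$ is an unstable focus. If instead $\psi\cap\Sigma^s_a=\emptyset$, the segment is repulsive, $\psi$ is periodic in backward time, the arc is described in backward time with return at $t_0<0$, and Lemma~\ref{lmmm}(ii) gives a stable focus. I expect this last step — pinning down the forward/backward orientation of the connecting arc from the attractive/repulsive type of the slide and converting it into the correct sign of $t_0$ — to be the main obstacle, along with the bookkeeping of the degenerate sub-cases in the third step; the focus conclusion itself is immediate once the return arc is recognized as an orbit of (\ref{gfkl}) issued from its tangency point.
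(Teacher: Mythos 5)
Your proposal is correct and follows essentially the same route as the paper's proof: rule out the degenerate case $a^+_{12}b^-_1=a^-_{12}b^+_1$ by showing the sliding set would then be all of $\Sigma$ (no exit) or a single point (no sliding segment); identify the arc leaving the sliding segment at a visible tangency point and feed it into Lemma~\ref{lmmm}(ii) with $t_0>0$ to get an unstable focus, located inside the region bounded by the arc and $\Sigma$ (the paper invokes the Poincar\'e--Bendixson theorem where you say the arc must encircle its focus); and treat the repulsive case by time reversal. The only cosmetic difference is that the paper first establishes that $\psi$ cannot slide on both $\Sigma^s_a$ and $\Sigma^s_r$, which your argument does not need for the statement as given.
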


\begin{proof}
Suppose that $a^+_{12}b_1^-=a^-_{12}b^+_1$. Since $\psi$ is a sliding periodic orbit of (\ref{DPWL}), $\Sigma^s_a\cup\Sigma^s_r\ne \emptyset$ by the definition of sliding periodic orbit given in Section 1. Thus we get either $a^+_{12}=a^-_{12}=0$, $b_1^+b_1^+<0$ or $a^+_{12}a^-_{12}<0$ by the definitions of $\Sigma^s_a$ and $\Sigma^s_r$. In the former case,
$\Sigma=\Sigma^s_a$ for $b_1^+<0$ and $\Sigma=\Sigma^s_r$ for $b_1^+>0$, while in the latter case, $\Sigma$ consists of $\Sigma^s_a$, $\Sigma^s_r$ and a singular sliding point. On the other hand, as introduced in Section 1, we adopt the definition of solutions proposed in \cite{ChL-YA}. Thus any orbit reaching $\Sigma$ at a time stays in $\Sigma$ forever for $\Sigma=\Sigma^s$, which contradicts that $\psi$ is a sliding periodic orbit. Hence, $a^+_{12}b_1^-\ne a^-_{12}b^+_1$ and then there exist no singular sliding points, provided the existence of sliding periodic orbits. Furthermore,
$\psi$ cannot slide simultaneously on $\Sigma^s_a$ and $\Sigma^s_r$. Otherwise, the sliding orbit of $\psi$ must be from $\Sigma^s_a$ (resp. $\Sigma^s_r$) to $\Sigma^s_r$ (resp. $\Sigma^s_a$) after going through a singular sliding point by the linearity of the left and right systems.

In the case that $\psi\cap\Sigma^s_r=\emptyset$, i.e., $\psi$ slides only on a segment of $\Sigma^s_a$,
$\psi$ either leaves $\Sigma$ at a visible tangency point of the left system and enters into $\Sigma^-$ or
leaves $\Sigma$ at a visible tangency point of the right system and enters into $\Sigma^+$ as $t$ increases.
Without loss of generality, we assume that
$\psi$ leaves $\Sigma$ at a visible tangency point $q$ of the left system and enters into $\Sigma^-$.
Associated with the definition of tangency point given in Section 1,
$b_1^-=0$ if $a_{12}^-=0$ and, in such case, $\Sigma^s=\emptyset$.
Thus, $a_{12}^-\ne 0$. Clearly, $q$ lies at $(0, -b_1^-/a_{12}^-)^\top$.
Due to the sliding motion, $\psi$ reaches
$\Sigma$ again at a different point $p$ from $q$ after a finite time.
Let the ordinate of $p$ be $(0, \alpha_0)^\top$, where $\alpha_0$ is a constant different from $-b_1^-/a_{12}^-$.
Thus, there exists a $t_0>0$ such that the solution $(x(t), y(t))^\top$
of the left system of (\ref{DPWL}) with $(x(0), y(0))^\top=(0, -b_1^-/a_{12}^-)^\top$
satisfies $x(t_0)=0, ~y(t_0)=\alpha_0$. By (ii) of Lemma~\ref{lmmm}, equilibrium $E_L$ in (\ref{DPWL}) is an unstable focus.
Denote the region surrounded by $\Sigma$ and the orbit from $q$ to $p$ in the left plane by $\Xi$.
For the left system, in $\Xi$ there exists an equilibrium by the Poincar\'e-Bendixson Theorem (see \cite[p. 54]{ChL-JK}).
Thus, $E_L$ is admissible by the definition of admissible equilibrium given in Section 1.

In the case that $\psi\cap\Sigma^s_a=\emptyset$, i.e., $\psi$ slides only on a segment of $\Sigma^s_r$, by time reversing
$\psi$ becomes a sliding periodic orbit sliding only on $\Sigma^s_a$. Moreover, the stabilities of $E_L$ and $E_R$
change but, the type and admissibility of these two equilibria do not change. Associated with the result given
in last paragraph, at least one of $E_L$ and $E_R$ is a stable admissible focus.
\end{proof}

Due to many parameters, it is necessary to reduce system (\ref{DPWL}) to a canonical form with less parameters. A celebrated result is \cite[Proposition 3.1]{ChL-EE}, where a canonical form with only $7$ parameters and capturing crossing periodic orbits is obtained. However, as indicated in that paper, the obtained canonical form cannot capture sliding periodic orbits, because the used change of variables is only continuous. Therefore, based on our purpose, we present a new canonical form that is $\Sigma$-equivalent to system (\ref{DPWL}) in the following Lemma~\ref{norm1}. This means that
a crossing (resp. sliding) periodic orbit of system (\ref{DPWL}) is transformed into a crossing (resp. sliding) periodic orbit of the canonical form, vice versa.

\begin{lm}
If $a^+_{12}b_1^-\ne a^-_{12}b^+_1$ and at least one of $E_L$ and $E_R$
is an admissible focus, then nondegenerate system {\rm(\ref{DPWL})} is $\Sigma$-equivalent to the canonical form
\begin{eqnarray}
\left\{
\begin{aligned}
\dot z&=\left(
\begin{array}{cc}
  2\alpha&1\\
  -1-\alpha^2&0\\
\end{array}
\right)z+
\left(
\begin{array}{c}
  0\\
  \beta\\
\end{array}
\right)~~~~{\rm if}~x>0,\\
\dot z&=\left(
\begin{array}{cc}
  ~~\gamma_1&~~\delta~~\\
  ~~\gamma_2&~~\gamma_3~~\\
\end{array}
\right)z+
\left(
\begin{array}{c}
  \eta\\
  \rho\\
\end{array}
\right)~~~~~{\rm if}~x<0
\end{aligned}
\right.
\label{dsgsh}
\end{eqnarray}
with $\alpha\ne 0, \beta>0, \eta\ne0$. Additionally, $\alpha>0$ {\rm(}resp. $\alpha<0${\rm)} if this admissible focus is unstable {\rm(}resp. stable{\rm)}.\\
\label{norm1}
\end{lm}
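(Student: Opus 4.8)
The plan is to produce an explicit orientation-preserving, $\Sigma$-fixing change of variables together with a positive time rescaling carrying system~(\ref{DPWL}) to~(\ref{dsgsh}). The first observation is that an affine map $z\mapsto Mz+v$ leaves $\Sigma=\{x=0\}$ invariant and preserves each half-plane precisely when $M=\left(\begin{smallmatrix} m_{11}&0\\ m_{21}&m_{22}\end{smallmatrix}\right)$ is lower triangular with $m_{11},m_{22}>0$ and $v=(0,v_2)^\top$; together with the rescalings $t\mapsto\kappa t$, $\kappa>0$, these are exactly the operations realizing a $\Sigma$-equivalence inside the class of linear Filippov systems, since $c\cdot(A^\pm z+b^\pm)$ is transformed by a positive factor, so $\Sigma^c,\Sigma^s_a,\Sigma^s_r$ and crossing/sliding orbits are sent to their counterparts. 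Because at least one of $E_L,E_R$ is an admissible focus, after the orientation-preserving half-plane swap $(x,y)\mapsto(-x,-y)$ (itself a $\Sigma$-equivalence, interchanging the two subsystems and leaving the relevant matrix unchanged) if necessary, I may assume the admissible focus is $E_L$, the equilibrium of the $+$ subsystem.

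I then normalize the $+$ subsystem. Since $E_L$ is a focus, $A^+$ has eigenvalues $\sigma\pm i\omega$ with $\omega>0$, and in particular $a^+_{12}\neq0$. Rescaling time by $1/\omega$ replaces these eigenvalues by $\alpha\pm i$ with $\alpha=\sigma/\omega$, so $\alpha\neq0$, and $\alpha>0$ (resp.\ $<0$) exactly when the focus is unstable (resp.\ stable). A lower-triangular conjugation $MA^+M^{-1}$ is chosen to set the $(1,2)$- and $(2,2)$-entries to $1$ and $0$; since similarity preserves $\operatorname{tr}=2\alpha$ and $\det=1+\alpha^2$, the remaining entries are forced to be $2\alpha$ and $-1-\alpha^2$, which is the $+$ block of~(\ref{dsgsh}). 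Finally the shift $v_2$ is chosen to move the $+$ tangency point $(0,-b^+_1/a^+_{12})^\top$ to the origin; this simultaneously kills the first component of the transformed constant vector, leaving it as $(0,\beta)^\top$. The $-$ subsystem transforms into a general linear field, producing the free entries $\gamma_1,\gamma_2,\gamma_3,\delta,\eta,\rho$.

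It remains to verify the sign constraints. A direct computation shows $\beta$ is a positive multiple of $a^+_{12}b^+_2-a^+_{22}b^+_1$; as $\det A^+>0$ for a focus, this quantity equals $\det A^+$ times the abscissa of $E_L=-(A^+)^{-1}b^+$, which is positive precisely because $E_L$ is admissible, whence $\beta>0$. For $\eta$, note that in the new coordinates the $+$ tangency is at the origin while the $-$ tangency lies at $y=-\eta/\delta$; these coincide, producing a singular sliding point, exactly when $a^+_{12}b^-_1=a^-_{12}b^+_1$, so the standing hypothesis $a^+_{12}b^-_1\neq a^-_{12}b^+_1$ is equivalent to $\eta\neq0$. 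Thus $\alpha\neq0$, $\beta>0$, $\eta\neq0$, and the stated correspondence between $\operatorname{sign}\alpha$ and the stability of the admissible focus all hold.

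The delicate point, and the step I expect to be the main obstacle, is carrying out the very first reduction of $A^+$ to the companion matrix with $(1,2)$-entry equal to $+1$ \emph{by an orientation-preserving, $\Sigma$-preserving map}. Indeed, the $(1,2)$-entry of $MA^+M^{-1}$ equals (up to the positive time factor) $a^+_{12}\,m_{22}/m_{11}$, whose sign is that of $a^+_{12}$ once $m_{11},m_{22}>0$; equivalently, the rotation sense of the focus is an invariant of orientation-preserving equivalence and must match the clockwise sense built into the canonical $+$ block. The plan is therefore to fix the sign of $a^+_{12}$ for the admissible focus from the half-plane labelling and the geometry forced by the sliding/crossing structure, choosing the swap and the designation of the focus so that $a^+_{12}>0$; checking that this normalization is always available, and that it is compatible with the claimed $\operatorname{sign}\alpha$--stability correspondence, is where the genuine work lies, the remaining steps being routine linear algebra.
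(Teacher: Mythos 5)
Your overall strategy --- a lower-triangular, $\Sigma$-fixing affine change plus positive time rescalings, normalization of the subsystem containing the admissible focus to companion form with trace $2\alpha$ and determinant $1+\alpha^2$, a shift sending its tangency point to the origin, and the identification of $\beta>0$ with admissibility and of $\eta\ne0$ with $a^+_{12}b^-_1\ne a^-_{12}b^+_1$ --- is essentially the paper's. However, the step you yourself flag as ``where the genuine work lies'' is a genuine gap, not a deferred routine check, and your proposal does not close it. As you correctly compute, with $m_{11},m_{22}>0$ the $(1,2)$-entry of $MA^+M^{-1}$ keeps the sign of $a^+_{12}$, and nothing in the hypotheses forces $a^+_{12}>0$ for the admissible focus: for instance $A^+=\left(\begin{smallmatrix}1&-1\\2&0\end{smallmatrix}\right)$ has trace $1$, determinant $2$, hence is a focus matrix with $a^+_{12}=-1$, and $b^+=(0,-1)^\top$ places its equilibrium at $(1/2,1/2)^\top$, which is admissible. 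The half-plane swap $(x,y)\mapsto(-x,-y)$ preserves the rotation sense of each subsystem, so if the other subsystem is chosen with the same rotation sense, neither the swap nor re-designating the focus rescues the sign. Within your restricted class of maps the normalization is therefore not always available, and your argument proves the lemma only when $a^+_{12}>0$.

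The paper sidesteps this by taking $M$ in (\ref{vch}) with second diagonal entry equal to $a^+_{12}$ itself, so the new $(1,2)$-entry becomes $(a^+_{12})^2>0$ automatically; it does not insist on $m_{22}>0$ at this stage. The price is that this map reverses orientation precisely when $a^+_{12}<0$, so the paper's parenthetical assertion that every transformation used is orientation-preserving is itself only justified under $a^+_{12}>0$ --- a condition the paper establishes separately, from the geometric normalization of Figure~\ref{part}, in the proof of Theorem~\ref{typeAnumber}, which is where the orientation actually matters. To complete your proof you must either derive $a^+_{12}>0$ from an additional normalization of the configuration (as the paper effectively does downstream), or drop the constraint $m_{22}>0$ and accept the correspondingly weaker notion of equivalence; as written, the crucial case $a^+_{12}<0$ is unproved.
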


\begin{proof}
Since the right system and the left one exchange under the transformation $x\to -x$,
we assume that $E_R$ is an admissible focus. Consequently, $a_{12}^+\ne 0$.

By the change
\begin{eqnarray}
z\to \left(
\begin{array}{cc}
  1&0\\
  a_{22}^+/a_{12}^+&a_{12}^+\\
\end{array}
\right)z+
\left(
\begin{array}{c}
  0\\
  -b_1^+/a_{12}^+\\
\end{array}
\right),
\label{vch}
\end{eqnarray}
system (\ref{DPWL}) is transformed into
\begin{eqnarray}
\left\{
\begin{aligned}
\dot z&=\left(
\begin{array}{cc}
  A_{11}&(a_{12}^+)^2\\
  A_{21}&0\\
\end{array}
\right)z+
\left(
\begin{array}{c}
  ~0~\\
  ~B~\\
\end{array}
\right)~~~~{\rm if}~x>0,\\
\dot z&=\left(
\begin{array}{cc}
  C_{11}&a_{12}^+a_{12}^-\\
  C_{21}&C_{22}\\
\end{array}
\right)z+
\left(
\begin{array}{c}
  D_1\\
  D_2\\
\end{array}
\right)~~~~~{\rm if}~x<0,
\end{aligned}
\right.
\label{mmmm}
\end{eqnarray}
and
\begin{eqnarray}
\begin{aligned}
&A_{11}=a^+_{11}+a_{22}^+, ~~~A_{21}=\frac{a_{12}^+a^+_{21}-a_{11}^+a_{22}^+}{(a_{12}^+)^2},~~~ B=\frac{a_{12}^+b_2^+-a_{22}^+b_1^+}{(a_{12}^+)^2},\\
&C_{11}=\frac{a^-_{11}a_{12}^++a_{12}^-a_{22}^+}{a_{12}^+},~~~C_{22}=\frac{a_{12}^+a_{22}^--a_{22}^+a_{12}^-}{a_{12}^+},
~~~D_1=\frac{a_{12}^+b_1^--a_{12}^-b_1^+}{a_{12}^+},\\
&C_{21}=\frac{a_{12}^+a^-_{21}+a_{22}^-a_{22}^+-a_{22}^+C_{11}}{(a_{12}^+)^2}, ~~~D_2=\frac{a_{12}^+b_2^--a_{22}^+b_1^--b_1^+C_{22}}{(a_{12}^+)^2}.
\end{aligned}
\label{ABD}
\end{eqnarray}
Note that the change (\ref{vch}) does not change the switching line.
Using time rescaling $t\to t/(a_{12}^+)^2$ for the right system of (\ref{mmmm}) and $t\to ut$ for the left system of (\ref{mmmm}),
we get
\begin{eqnarray}
\left\{
\begin{aligned}
\dot z&=\left(
\begin{array}{cc}
  A_{11}/(a^+_{12})^2&1\\
  A_{21}/(a_{12}^+)^2&0\\
\end{array}
\right)z+
\left(
\begin{array}{c}
  0\\
  B/(a_{12}^+)^2\\
\end{array}
\right)~~~~{\rm if}~x>0,\\
\dot z&=\left(
\begin{array}{cc}
 ~uC_{11}~&\delta~\\
  ~uC_{21}~&uC_{22}~\\
\end{array}
\right)z+
\left(
\begin{array}{c}
  ~~~uD_1~~~\\
  ~~~uD_2~~~\\
\end{array}
\right)~~~~{\rm if}~x<0,
\end{aligned}
\right.
\label{sflko}
\end{eqnarray}
where $\delta={\rm sgn}(a_{12}^-a_{12}^+)$ and
$$
u=\left\{
\begin{aligned}
&1/|a_{12}^-a_{12}^+|~~~&&{\rm if}~a_{12}^-\ne0,\\
&1 &&{\rm if}~a_{12}^-=0.
\end{aligned}
\right.
$$
Let
$$w:=\left\{
\begin{aligned}
\begin{array}{ll}
1 ~~~&{\rm if} ~A^2_{11}+4A_{21}(a_{12}^+)^2=0,\\
\left|A^2_{11}+4A_{21}(a_{12}^+)^2\right|^{1/2}/(2(a_{12}^+)^2)~~~&{\rm if}~A^2_{11}+4A_{21}(a_{12}^+)^2\ne0.
\end{array}
\end{aligned}
\right.$$
Applying the transformation $(x, y, t)\rightarrow(x/w, y, t/w)$ in (\ref{sflko}), we get
\begin{eqnarray}
\left\{
\begin{aligned}
\dot z&=\left(
\begin{array}{cc}
  2\alpha&1\\
  m-\alpha^2&0\\
\end{array}
\right)z+
\left(
\begin{array}{c}
  0\\
  \beta\\
\end{array}
\right)~~~~{\rm if}~x>0,\\
\dot z&=\left(
\begin{array}{cc}
  ~~\gamma_1~&~\delta~\\
  ~~\gamma_2~&~\gamma_3~\\
\end{array}
\right)z+
\left(
\begin{array}{c}
  \eta\\
  \rho\\
\end{array}
\right)~~~~~{\rm if}~x<0,
\end{aligned}
\right.
\label{dsgsh2}
\end{eqnarray}
where $\gamma_1=uC_{11}/w,~ \gamma_2=uC_{21}/w^2, ~\gamma_3=uC_{22}/w,~  \rho=uD_2/w$ and
\begin{eqnarray}
\begin{aligned}
&\alpha=A_{11}/(2w(a^+_{12})^2), ~~~~~~~&&\beta=B/(w(a_{12}^+)^2),\\
& \eta=uD_1, &&m={\rm sgn}\left\{A^2_{11}+4A_{21}(a_{12}^+)^2\right\}.
\end{aligned}
\label{abe}
\end{eqnarray}

Since $E_R$ of (\ref{DPWL}) is an admissible focus,
we get
\begin{eqnarray}
\begin{aligned}
&a_{11}^++a_{22}^+\ne 0,~~~a_{12}^+b_2^+-a_{22}^+b_1^+>0,\\
&(a_{11}^++a_{22}^+)^2+4(a_{12}^+a_{21}^+-a_{11}^+a_{22}^+)<0.
\end{aligned}
\label{a1221}
\end{eqnarray}
It follows from (\ref{ABD}), (\ref{abe}) and (\ref{a1221}) that
$\alpha\ne 0,~m=-1,~\beta>0,~\eta\ne0$ in system (\ref{dsgsh2}). That is, system (\ref{DPWL}) is transformed into
(\ref{dsgsh}). Observe that any used transformation is an orientation preserving diffeomorphism
and keeps the $y$-axis being the switching line. Eventually, by \cite[Proposition 2.22]{ChL-MG} system (\ref{DPWL}) is $\Sigma$-equivalent to
system (\ref{dsgsh}) under the given conditions.

It is easy to see that $a_{11}^++a_{22}^+\ne 0$ in (\ref{a1221}) is replaced by
 $a_{11}^++a_{22}^+>0$ (resp. $a_{11}^++a_{22}^+< 0$) if
the admissible focus $E_R$ of (\ref{DPWL}) is unstable (resp. stable).
So, we get the sign of $\alpha$ depending on the stability of this focus and the lemma is proved.
\end{proof}

\begin{figure}[htp]
  \begin{minipage}[t]{0.25\linewidth}
  \centering
  \includegraphics[width=1.4in]{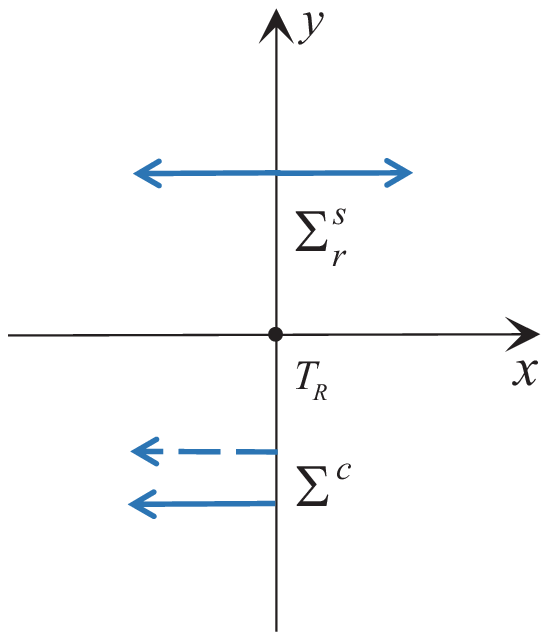}
  \caption*{(a) $\delta=0, \eta<0$}
  \end{minipage}
  \begin{minipage}[t]{0.25\linewidth}
  \centering
  \includegraphics[width=1.4in]{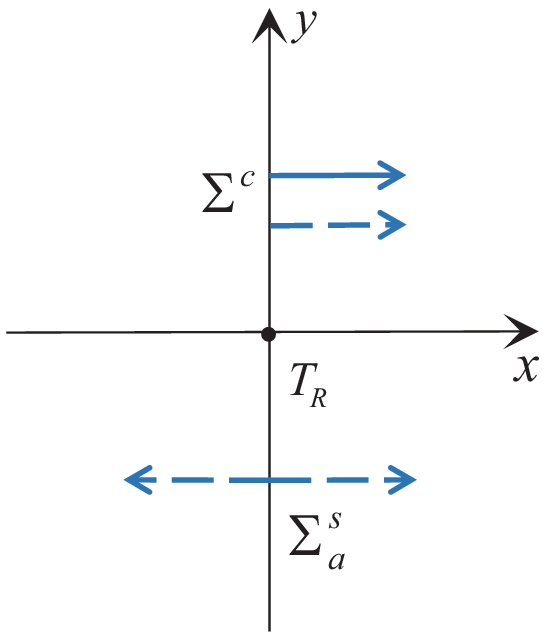}
  \caption*{(b) $\delta=0, \eta>0$}
  \end{minipage}
  \begin{minipage}[t]{0.25\linewidth}
  \centering
  \includegraphics[width=1.4in]{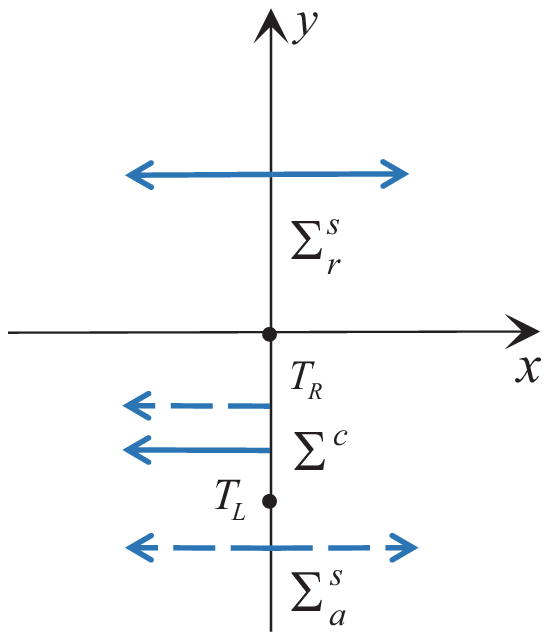}
  \caption*{(c) $\delta=-1, \eta<0$}
  \end{minipage}
  \begin{minipage}[t]{0.25\linewidth}
  \centering
  \includegraphics[width=1.4in]{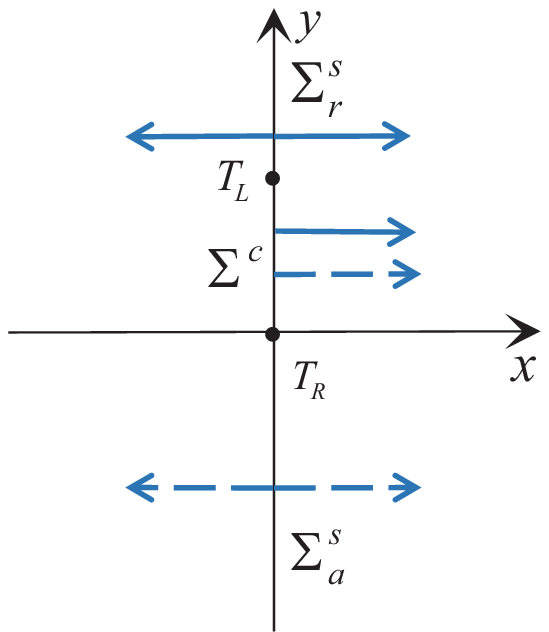}
  \caption*{(d) $\delta=-1, \eta>0$}
  \end{minipage}
  ~~~~~~~~~~~~~~~
  \begin{minipage}[t]{0.25\linewidth}
  \centering
  \includegraphics[width=1.4in]{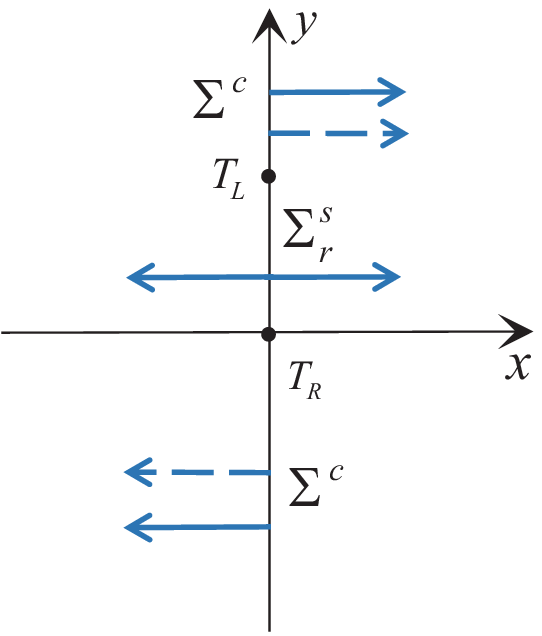}
  \caption*{(e) $\delta=1, \eta<0$}
  \end{minipage}
  ~~~~~~~~~~~~~~~
  \begin{minipage}[t]{0.25\linewidth}
  \centering
  \includegraphics[width=1.4in]{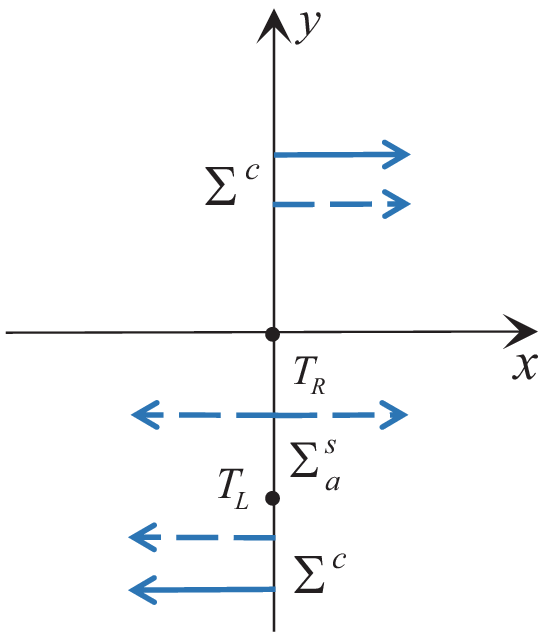}
  \caption*{(f) $\delta=1, \eta>0$}
  \end{minipage}
\caption{{\small Components of switching line $\Sigma$ of system (\ref{dsgsh})}}
\label{csl}
\end{figure}

By the definition of tangency point given in Section 1, it is easy to see that
the right system of (\ref{dsgsh}) has a unique tangency point $T_R:=(0, 0)^\top$ but
its left system may have zero or one tangency point depending on the signs of parameters $\delta$ and $\eta$.
In order to make the components of switching line $\Sigma$ clear for system (\ref{dsgsh}),
we give all possibilities in Figure \ref{csl} by the signs of $\delta$ and $\eta$.
We remark that in Figure \ref{csl} the arrows (resp. broken arrows) in the right hand plane denote the directions of
vector fields on $\Sigma$ of the right (resp. left) system, and
the arrows (resp. broken arrows) in the left hand plane denote the directions of
vector fields on $\Sigma$ of the left (resp. right) system.
Here $T_L$ denotes a tangency point of the left system.

Having these lemmas, we give the proof of Theorem~\ref{typeAnumber} in the following.

\begin{proof}[Proof of {\rm Theorem~\ref{typeAnumber}}]
For nondegenerate system (\ref{DPWL}), we first prove that the configuration of a sliding periodic orbit $\psi$ is one of Figure {\rm\ref{1A}}(a)-(d) in the sense of $\Sigma$-equivalence and time reversing. In fact, from the first paragraph in the proof of Lemma~\ref{suff}, $\psi$ cannot slide simultaneously on $\Sigma^s_a$ and $\Sigma^s_r$. Moreover, by an appropriate transformation of forms $(x, y, t)\rightarrow(\pm x, \pm y, \pm t)$, we always can assume that
$\psi$ slides on a segment of $\Sigma^s_a$ and the part in the right plane is as shown in Figure~\ref{part}.
That is, $\psi$ leaves $\Sigma^s_a$ at the visible tangency point $T_R$ of the right system of (\ref{DPWL}) and
reaches again $\Sigma$ at a point $T_0$ lying below $T_R$ from $\Sigma^+$ as $t$ increases.
\begin{figure}
  \begin{minipage}[t]{1.0\linewidth}
  \centering
  \includegraphics[width=1.50in]{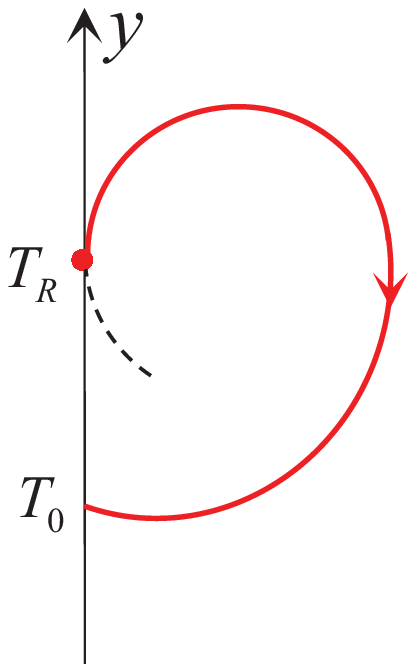}
  \end{minipage}
\caption{{\small A part of $\psi$ or $\tilde\psi$.}}
\label{part}
\end{figure}
Since (\ref{DPWL}) has a sliding periodic orbit, it can be $\Sigma$-equivalently
written as system (\ref{dsgsh}) by Lemmas \ref{suff} and \ref{norm1}. In system (\ref{dsgsh}), we
denote the sliding periodic orbit corresponding to $\psi$ by $\tilde\psi$.
We claim that the part of $\tilde\psi$ in the right plane is also as shown in Figure~\ref{part}.
In fact, from the forms of transformations in the proof of Lemma~\ref{norm1} we only need to check $a_{12}^+>0$
in (\ref{vch}). Clearly, the tangency point $T_R$ of the right system of (\ref{DPWL})
lies at $(0, -b_1^+/a_{12}^+)^\top$. Since the part of $\psi$ in the right plane is as shown in Figure~\ref{part},
equilibrium $E_R$ of (\ref{DPWL}) is an unstable admissible focus and $\dot y|_{T_R}=(-b_1^+/a_{12}^+)a_{22}^++b_2^+>0$.
Associated with the coordinate of $E_R$, we get $a_{12}^+b_2^+-a_{22}^+b_1^+>0$, implying $a_{12}^+>0$.
Thus, the part of $\tilde\psi$ in the right plane is also as shown in Figure~\ref{part}.

It is easy to check that the component of $\Sigma$ cannot be one of Figure~\ref{csl}(a)(c)(e) because
$\tilde\psi$ in the right plane is as shown in Figure~\ref{part}. So we only consider that the component of
$\Sigma$ is one of Figure~\ref{csl}(b)(d)(f). When the component of $\Sigma$ is one of Figure~\ref{csl}(b)(d),
we get $T_0\in\Sigma^s_a$. Thus $\tilde\psi$ slides from $T_0$ to $T_R$ along $\Sigma^s_a$ by the periodicity of $\tilde\psi$,
i.e., $\tilde\psi$ forms the configuration shown in Figure~\ref{1A}(a). When the component of $\Sigma$ is
Figure~\ref{csl}(f), either $T_0\in\Sigma^c$ or $T_0\in\Sigma^s_a$ or $T_0=T_L$. Thus $\tilde\psi$ either crosses
$\Sigma$ at $T_0$ or slides from $T_0$ along $\Sigma^s_a$. If $\tilde\psi$ crosses $\Sigma$ at $T_0$,
then $\tilde\psi$ is as shown in Figure~\ref{1A}(b) when $T_0\in\Sigma^c$ and as shown in Figure~\ref{1A}(c) when $T_0=T_L$.
If $\tilde\psi$ slides from $T_0$ along $\Sigma^s_a$, then it is as shown in Figure~\ref{1A}(a)
when the sliding direction is upward and as shown in either Figure~\ref{1A}(d) or Figure~\ref{Asingle}
when the sliding direction is downward. Therefore, the configuration of $\tilde\psi$ must be one of
Figure~\ref{1A}(a)-(d) and Figure~\ref{Asingle}. Because all transformations we used are time reversing and diffeomorphism keeping $y$-axis as switching line, the configuration of any sliding periodic orbit $\psi$ of (\ref{DPWL}) is one of Figure~\ref{1A}(a)-(d) and Figure~\ref{Asingle} in the sense of $\Sigma$-equivalence and time reversing. On the other hand, it is easy to observe that
Figure~\ref{Asingle} is equivalent to Figure~\ref{1A}(c) under the change $(x, y, t)\rightarrow(-x, -y, t)$.
Thus, the configuration of any sliding periodic orbit $\psi$ of (\ref{DPWL}) is one of Figure~\ref{1A}(a)-(d) in the sense of $\Sigma$-equivalence and time reversing.
\begin{figure}[htp]
\begin{minipage}[t]{1.0\linewidth}
  \centering
  \includegraphics[width=1.50in]{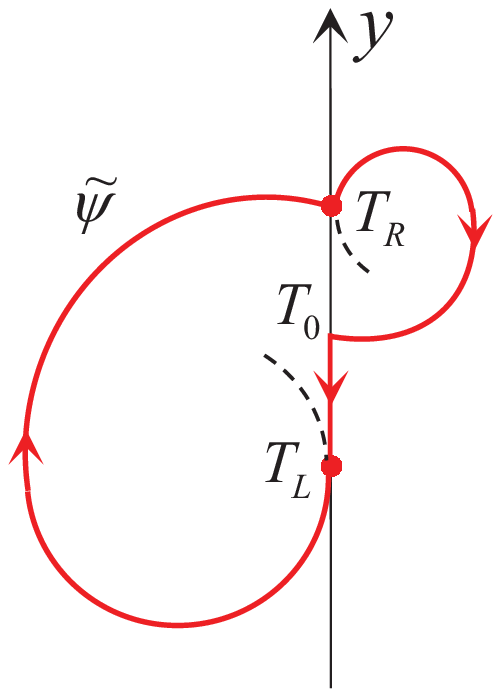}
  \end{minipage}
\caption{{\small Configuration of one sliding periodic orbit}}
\label{Asingle}
\end{figure}

From the above two paragraphs, we immediately obtain conclusion (i) if $\psi$ is a unique sliding periodic orbit of system (\ref{DPWL}).

Now we assume that system (\ref{DPWL}) has at least two sliding periodic orbits. Let
$\psi, \phi$ be two different ones among them. By the first two paragraphs of this proof, we assume that
$\psi, \phi$ of system (\ref{DPWL}) correspond to $\tilde\psi, \tilde\phi$ of system (\ref{dsgsh}) and
$\tilde\psi$ has configuration shown in one of Figure~\ref{1A}(a)-(d) and Figure~\ref{Asingle}.
As in the second paragraph of this proof, the component of $\Sigma$ is one of Figure~\ref{csl}(b)(d)(f).
It is easy to observe that there exists a unique sliding periodic orbit if the component of $\Sigma$
is of form Figure~\ref{csl}(b). Thus the component of $\Sigma$ is one of Figure~\ref{csl}(d)(f).

\begin{figure}[htp]
  \begin{minipage}[t]{1.0\linewidth}
  \centering
  \includegraphics[width=1.55in]{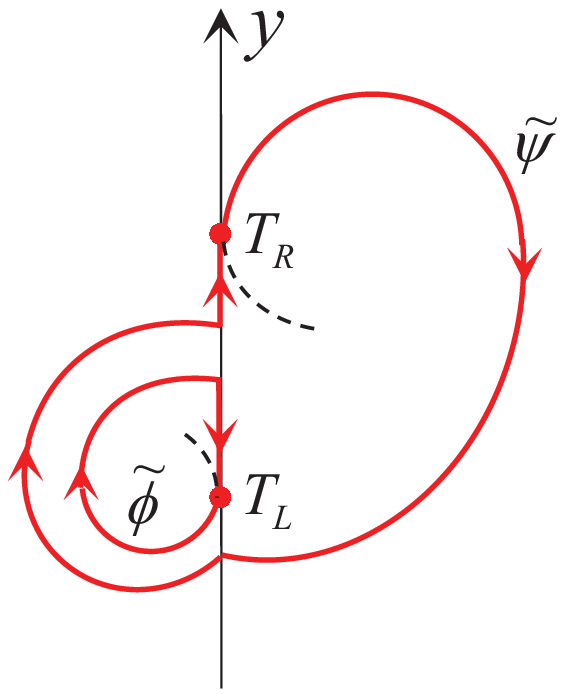}
  \end{minipage}
\caption{{\small Configuration of two sliding periodic orbits}}
\label{f}
\end{figure}

If the component of $\Sigma$ is of Figure~\ref{csl}(d), we find that $\tilde\psi$ must be the configuration of Figure~\ref{1A}(a)
and $\tilde\phi$ slides on $\Sigma^s_r$. Thus we get the configuration of $\tilde\psi$
and $\tilde\phi$ as shown in Figure~\ref{2A}(a). If the component of $\Sigma$ is of Figure~\ref{csl}(f),
as in the second paragraph of this proof, $\tilde\psi$ is of either one of
Figure~\ref{1A}(a)-(d) or Figure~\ref{Asingle}. We get the configuration of $\tilde\psi$ and $\tilde\phi$ as shown in
Figure~\ref{2A}(b)(c) when $\tilde\psi$ is of Figure~\ref{1A}(a), as shown in
Figure~\ref{f} when $\tilde\psi$ is of Figure~\ref{1A}(b).
However, $\tilde\psi$ is a unique sliding periodic orbit when
it is either Figure~\ref{1A}(c) or Figure~\ref{1A}(d) or Figure~\ref{Asingle}.
Thus all possible configurations of $\tilde\psi$ and $\tilde\phi$ of system (\ref{dsgsh}) are as shown in Figure~\ref{2A}(a)-(c) and Figure~\ref{f}.
This means that the configuration of any two sliding periodic orbits $\psi$ and $\phi$ of system (\ref{DPWL})
is one of Figure~\ref{2A}(a)-(c) and Figure~\ref{f} in the sense of $\Sigma$-equivalence and time reversing.
On the other hand, it is easy to observe that Figure~\ref{f} is equivalent to Figure~\ref{2A}(c) under the change $(x, y, t)\rightarrow(-x, -y, t)$.
Therefore, the configuration of $\psi$ and $\phi$ of (\ref{DPWL}) is one of Figure~\ref{2A}(a)-(c) in the sense of $\Sigma$-equivalence and time reversing, i.e., conclusion (ii) holds.

From the definition of sliding periodic orbits, any such orbit must pass through
$T_L$ or $T_R$. Associated with the configurations of $\psi$ and $\phi$ obtained in the above,
we get that $2$ is the maximum number of sliding periodic orbits and the proof is completed.
\end{proof}

\section{Proof of Theorem~\ref{AC}}
\setcounter{equation}{0}
\setcounter{lm}{0}
\setcounter{thm}{0}
\setcounter{rmk}{0}
\setcounter{df}{0}
\setcounter{cor}{0}

The purpose of this section is to give the proof of Theorem~\ref{AC}. For brevity, we define $\tau:=\gamma_1+\gamma_3$ and $\Delta:=\gamma_1\gamma_3-\gamma_2$ in the following. To this end, we need some preliminaries. Assume that the eight parameters $\alpha, \beta, \delta, \eta, \rho, \gamma_1, \gamma_2, \gamma_3$
in system (\ref{dsgsh}) satisfy
\begin{eqnarray}
\begin{aligned}
&\alpha>0, &&\beta>0, &&\eta>0,&&\rho-\gamma_3\eta<0, \\
&\delta=1, ~~~~~&&\gamma_1=\gamma_3,~~~~&&\tau>0, ~~~~~&&\tau^2<4\Delta.~~&&
\end{aligned}
\label{addcond}
\end{eqnarray}
It is easy to check that for the left system in (\ref{dsgsh}) the unique equilibrium lies at
$$(\overline x, \overline y)^\top:=\left(\frac{\rho-\gamma_3\eta}{\Delta}, \frac{\eta\gamma_2-\rho\gamma_1}{\Delta}\right)^\top$$
and is an unstable focus, and $(0,-\eta)^\top$ is the tangency point.
All orbits in a small neighborhood of this equilibrium rotate clockwise.
Note that $\overline x<0$ because $\rho-\gamma_3\eta<0$ and
$\Delta>0$ by (\ref{addcond}).
By straight computations, we get
the solution satisfying $(x(0), y(0))^\top=(0, y_0)^\top$ of the left system
\begin{eqnarray}
\left(
\begin{aligned}
x(t)\\
y(t)
\end{aligned}
\right)=e^{\gamma_3t}
\left(
\begin{aligned}
  -\overline x{\rm cos}\nu t+\frac{1}{\nu }(y_0-\overline y){\rm sin}\nu t\\
  -\frac{\gamma_2\overline x}{\nu }{\rm sin}\nu t+(y_0-\overline y){\rm cos}\nu t\\
\end{aligned}
\right)+\left(
\begin{aligned}
\overline x\\
\overline y
\end{aligned}
\right),
\label{solutions1}
\end{eqnarray}
where $\nu :=\sqrt{|\gamma_2|}$.
Let $t^-$ be the minimum time for the orbit of (\ref{dsgsh}) from $(0,y_0)^\top$
to intersect $y$-axis, where $y_0\le -\eta$.
Clearly, $\nu t^-\in (\pi, 2\pi]$ because of the linearity of (\ref{dsgsh}).
By (\ref{solutions1}) we get $\sin \nu t^-\ne 0$ and
\begin{eqnarray}
\begin{aligned}
y_0&=\overline y+\frac{\left(\cos \nu t^--e^{-\gamma_3t^-}\right)\nu}{\sin\nu t^-}\overline x
=-\eta-\frac{\nu (\rho-\gamma_3\eta)}{\Delta}\frac{\varphi_+(t^-)}{{\rm sin}\nu t^-}e^{-\gamma_3t^-},\\
y(t^-)&=e^{\gamma_3t^-}\left(-\frac{\gamma_2\overline x}{\nu}\sin\nu t^- +(y_0-\overline y)\cos \nu t^-\right)+\overline y\\
&=-\eta+\frac{\nu (\rho-\gamma_3\eta)}{\Delta}\frac{\varphi_-(t^-)}{{\rm sin}\nu t^-}e^{\gamma_3t^-},
\end{aligned}
\label{lpoincare}
\end{eqnarray}
where
$$\varphi_\pm(t^-)=1-e^{\pm\gamma_3t^-}({\rm cos}\nu t^-\mp\frac{\gamma_3}{\nu }{\rm sin}\nu t^-).$$
Obviously, $t^-=\hat t^-$ when $y_0=-\eta$, where
$\hat t^-\in (\pi/\nu, 2\pi/\nu]$ and satisfies $\varphi_+(\hat t^-)=0$ by the first equality in (\ref{lpoincare}).
It is easy to prove the uniqueness of $\hat t^-$ by the expression of $\varphi_+(t^-)$.
Thus, for $y_0\le -\eta$ we get $t^-\in (\pi/\nu, \hat t^-]$.
Define a left Poincar\'e map
\begin{eqnarray*}
P_L:~\{y\in\mathbb{R}:~y\le-\eta\}&\to& \{y\in\mathbb{R}:~y\ge y_\eta\}\\
~~~~~~~~~~y_0&\mapsto& y(t^-),
\end{eqnarray*}
where $y_\eta$ is the $y$-coordinate of the first intersection point between $y$-axis and the orbit
of (\ref{dsgsh}) from $(0,-\eta)^\top$. Using (\ref{lpoincare}), we have the reverse $P_L^{-1}$ of $P_L$ for $y\ge y_\eta$
in parametric form
\begin{eqnarray}
\begin{aligned}
y&=-\eta+\frac{\nu (\rho-\gamma_3\eta)}{\Delta}\frac{\varphi_-(t^-)}{{\rm sin}\nu t^-}e^{\gamma_3t^-},\\
P^{-1}_L(y)&=-\eta-\frac{\nu (\rho-\gamma_3\eta)}{\Delta}\frac{\varphi_+(t^-)}{{\rm sin}\nu t^-}e^{-\gamma_3t^-},
\end{aligned}
\label{leftp}
\end{eqnarray}
where $t^-\in (\pi/\nu, \hat t^-]$.

For the right system in (\ref{dsgsh}),
it is easy to check that the unique equilibrium
is an unstable admissible focus and $(0, 0)^\top$ is the tangency point.
All orbits in a small neighborhood of this equilibrium rotate clockwise.
Similarly to the left system, for the right system we can define the right Poincar\'e map $P_R$ and express it
in the parametric form as
\begin{eqnarray}
y=-\frac{\beta}{1+\alpha^2}\frac{e^{-\alpha t^+}\psi_+(t^+)}{{\rm sin}t^+},~~~~
P_R(y)=\frac{\beta}{1+\alpha^2}\frac{e^{\alpha t^+}\psi_-(t^+)}{{\rm sin}t^+}
\label{rightp}
\end{eqnarray}
for $y\ge 0$,
where $t^+\in (\pi, \hat t^+]$ and
$$
\psi_{\pm}(t^+):=1-e^{\pm\alpha t^+}\left(\cos t^+\mp\alpha\sin t^+\right).
$$
Here $\hat t^+\in (\pi, 2\pi]$ is unique and satisfies $\psi_+(\hat t^+)=0$.

In the following lemma, we give some properties of $P^{-1}_L(y)$ and $P_R(y)$.

\begin{lm}
Assume that condition {\rm(\ref{addcond})} holds.\\
{\rm (i)} For $y>y_\eta$, we have $P^{-1}_L(y)<-\eta$ and
\begin{eqnarray}
\lim_{y\rightarrow+\infty}\frac{dP^{-1}_L(y)}{dy}=-e^{-\frac{\gamma_3}{\nu}\pi},
~~~~~~~~~\frac{d^2P^{-1}_L(y)}{dy^2}>0.
\label{lpprop}
\end{eqnarray}
{\rm (ii)} For $y>0$, we have $P_R(y)<0$ and
\begin{eqnarray}
\lim_{y\rightarrow+\infty}\frac{dP_R(y)}{dy}=-e^{\alpha\pi},
~~~~~~~~~~~~~~~~~~~\frac{d^2P_R(y)}{dy^2}<0.
\label{rpprop}
\end{eqnarray}
\label{lp}
\end{lm}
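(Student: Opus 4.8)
The plan is to differentiate the parametric representations (\ref{rightp}) and (\ref{leftp}) by the chain rule, writing $dP/dy=(dP/dt)/(dy/dt)$ and $d^2P/dy^2=\big(\tfrac{d}{dt}(dP/dy)\big)/(dy/dt)$, so that every quantity is expressed through the parameter. The computation rests on one clean identity: differentiating $\psi_\pm$ directly (the $\pm\alpha\cos t^+$ terms cancel) gives $\psi_\pm'(t^+)=(1+\alpha^2)e^{\pm\alpha t^+}\sin t^+$, and analogously, after the rescaling $\tilde s=\nu t^-$, $\mu=\gamma_3/\nu$ — under which $\varphi_\pm(t^-)$ becomes $1-e^{\pm\mu\tilde s}(\cos\tilde s\mp\mu\sin\tilde s)$, exactly the form of $\psi_\pm$ with $\alpha$ replaced by $\mu$ — one has $\varphi_\pm'=(1+\mu^2)e^{\pm\mu\tilde s}\sin\tilde s$ (the extra Jacobian factor $\nu$ cancels in every ratio). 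I would establish (ii) first and obtain (i) by reusing this structural match.

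For (ii), feeding $\psi_\pm'$ into (\ref{rightp}) and simplifying the two quotient derivatives, I expect the cancellations
$$\frac{dy}{dt^+}=-\frac{\beta}{1+\alpha^2}\frac{\psi_-(t^+)}{\sin^2 t^+},\qquad \frac{dP_R}{dt^+}=\frac{\beta}{1+\alpha^2}\frac{\psi_+(t^+)}{\sin^2 t^+},$$
so that $dP_R/dy=-\psi_+(t^+)/\psi_-(t^+)$. Since $y\to+\infty$ forces $t^+\to\pi^+$, evaluating at $t^+=\pi$ gives $-(1+e^{\alpha\pi})/(1+e^{-\alpha\pi})=-e^{\alpha\pi}$, the claimed limit. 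The sign facts come from monotonicity on $(\pi,\hat t^+]\subset(\pi,2\pi)$: because $\psi_-'=(1+\alpha^2)e^{-\alpha t^+}\sin t^+<0$ there and $\psi_-(\pi)=1+e^{-\alpha\pi}>0$, $\psi_-(2\pi)=1-e^{-2\alpha\pi}>0$, one gets $\psi_->0$ throughout, whence $P_R<0$ and $dy/dt^+<0$.

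The second derivative reduces to differentiating $-\psi_+/\psi_-$ once more and dividing by $dy/dt^+$. Using the identity for $\psi_\pm'$, the Wronskian-type numerator collapses to $\psi_+'\psi_--\psi_+\psi_-'=(1+\alpha^2)\sin t^+\,(2\sinh(\alpha t^+)-2\alpha\sin t^+)$; the crucial point is that $\sinh(\alpha t^+)>\alpha t^+>\alpha\sin t^+$ keeps the bracket positive, so with $\sin t^+<0$ this numerator is negative, $\tfrac{d}{dt^+}(dP_R/dy)>0$, and dividing by $dy/dt^+<0$ yields $d^2P_R/dy^2<0$. Part (i) follows by the same machinery after the rescaling: with $K:=\nu(\rho-\gamma_3\eta)/\Delta<0$ and $\mu>0$ (both from (\ref{addcond})), the cancellations give $dP_L^{-1}/dy=-\varphi_-(\tilde s)/\varphi_+(\tilde s)$ — note the $+$ and $-$ indices are now \emph{interchanged}, because $P_L^{-1}$ is structurally the inverse of a $P_R$-type map, so its $y$-derivative is of reciprocal type. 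Evaluating at $\tilde s=\pi$ gives $-e^{-\mu\pi}=-e^{-(\gamma_3/\nu)\pi}$. The same inequality $\sinh(\mu\tilde s)>\mu\sin\tilde s$ controls the second derivative, but the interchange of indices reverses the sign of the Wronskian numerator, so now $\tfrac{d}{d\tilde s}(dP_L^{-1}/dy)<0$ while $dy/d\tilde s=K\varphi_+/\sin^2\tilde s<0$, giving $d^2P_L^{-1}/dy^2>0$; the bound $P_L^{-1}<-\eta$ then drops out of the sign of the defining expression.

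The main obstacle is not any single computation but securing the two clean cancellations that collapse $dP/dy$ into the bare ratios $-\psi_+/\psi_-$ and $-\varphi_-/\varphi_+$, and then pinning down every sign on the exact parameter windows $(\pi,\hat t^+]$ and $(\pi,\nu\hat t^-]$ — whose right endpoints are only implicitly defined by $\psi_+(\hat t^+)=0$ and $\varphi_+(\hat t^-)=0$. Once the monotonicity of $\psi_\pm,\varphi_\pm$ (driven by $\sin<0$ on $(\pi,2\pi)$) and the elementary inequality $\sinh x>x>\sin x$ are in hand, the limits and the concavity/convexity signs follow immediately.
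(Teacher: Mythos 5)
Your proposal is correct and follows essentially the same route as the paper: both differentiate the parametric representations (\ref{leftp}) and (\ref{rightp}) with respect to the parameter, reduce $dP/dy$ to the ratio $-\psi_+/\psi_-$ (equivalently the paper's $\frac{y}{P_R(y)}e^{2\alpha t^+}$), evaluate the limit at the endpoint $t^+\to\pi$ (resp. $\nu t^-\to\pi$), and reduce the sign of the second derivative to that of $\sinh\alpha t^+-\alpha\sin t^+$ (resp. $\sinh\gamma_3 t^--\frac{\gamma_3}{\nu}\sin\nu t^-$). The only cosmetic difference is that you verify this last sign directly via $\sinh x>x>\sin x$ where the paper cites \cite{ChL-EE}.
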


Notice that the above partial results can also be obtained in a different canonical form of system (\ref{DPWL}) (see \cite{ChL-EE, ChL-FEEE}).
However, for convenience and completeness we still present it here in our canonical form and give a short proof.

\begin{proof}
The first part of conclusion (i) follows directly the definition of $P^{-1}_L$ given in (\ref{leftp}). By the parametric form of $P^{-1}_L$
given in (\ref{leftp}) we get
\begin{eqnarray}
\frac{dy}{dt^-}=-\frac{\nu\left(P^{-1}_L(y)+\eta\right)}{\sin \nu t^-}e^{\gamma_3t^-},~~~~~~~
\frac{dP^{-1}_L(y)}{dt^-}=-\frac{\nu\left(y+\eta\right)}{\sin \nu t^-}e^{-\gamma_3t^-}.
\label{dert}
\end{eqnarray}
Then
\begin{eqnarray}
\frac{dP^{-1}_L(y)}{dy}=\frac{y+\eta}{P^{-1}_L(y)+\eta}e^{-2\gamma_3 t^-}.
\label{dery}
\end{eqnarray}
Since the orbits of the left system in (\ref{dsgsh}) rotate at a steady speed surrounding the left equilibrium,
$t^-\rightarrow\pi/\nu$ as $y\rightarrow+\infty$. Thus, by (\ref{dery}) and the decreasing of $P^{-1}_L(y)$
we have the first equality in (\ref{lpprop}).
Further, using (\ref{dery}) we obtain
{\small\begin{eqnarray}
\frac{d^2P^{-1}_L(y)}{dy^2}(P^{-1}_L(y)+\eta)\!+\!\left(\frac{dP^{-1}_L(y)}{dy}\right)^2\!=\!e^{-2\gamma_3 t^-}
\!-\!\frac{2\gamma_3(y+\eta)e^{-2\gamma_3 t^-}}{\frac{dy}{~dt^-}}.
\label{ddjah}
\end{eqnarray}}
By (\ref{leftp}), (\ref{dert}) and (\ref{ddjah}) we get
$$
\begin{aligned}
\frac{d^2P^{-1}_L(y)}{dy^2}&=\left(1-2\gamma_3
(y+\eta)\frac{dt^-}{dy}-\left(\frac{y+\eta}{P^{-1}_L(y)+\eta}\right)^2e^{-2\gamma_3 t^-}\right)
\frac{e^{-2\gamma_3t^-}}{P^{-1}_L(y)+\eta}\\
&=\left((P^{-1}_L(y)+\eta)^2\left(1-\frac{2\gamma_3
(y+\eta)}{dy/dt^-}\right)-(y+\eta)^2e^{-2\gamma_3 t^-}\right)
\frac{e^{-2\gamma_3 t^-}}{(P^{-1}_L(y)+\eta)^3}\\
&=\left(\!\varphi^2_+(t^-)\!\left(\!1\!-\frac{2\gamma_3\varphi_-(t^-){\rm sin}\nu t^-}{\nu e^{-\gamma_3t^-}\varphi_+(t^-)}\!\right)\!-\varphi^2_-(t^-)e^
{2\gamma_3t^-}\!\right)\!\frac{\nu ^2\overline x^2}{({\rm sin}\nu t^-)^2}
\frac{e^{-4\gamma_3 t^-}}{(P^{-1}_L(y)\!+\!\eta)^3}\\
&=-2\overline x^2  \Delta\left({\rm sinh}\gamma_3t^--\frac{\gamma_3}{\nu }{\rm sin}\nu t^-\right)
\frac{e^{-3\gamma_3t^-}}{(P^{-1}_L(y)+\eta)^3}\\
&=-\frac{2(\rho-\gamma_3\eta)^2}{\Delta}\frac{{\rm sinh}\gamma_3t^--\frac{\gamma_3}{\nu }{\rm sin}\nu t^-}{(P^{-1}_L(y)+\eta)^3}e^{-3\gamma_3t^-}.
\end{aligned}
$$
As indicated in \cite{ChL-EE},
$${\rm sign}\left({\rm sinh}\gamma_3t^--\frac{\gamma_3}{\nu }
{\rm sin}\nu t^-\right)={\rm sign}\gamma_3>0.$$
Note that $\Delta>0$ as shown below (\ref{addcond}).
This means
$$\frac{d^2P^{-1}_L(y)}{dy^2}>0.$$
The proof of conclusion (i) is finished.

The first part of conclusion (ii) is obvious because of the definition of $P_R$ given in (\ref{rightp}).
In order to prove the second part of (ii), similarly to $P^{-1}_L$ we firstly get
\begin{eqnarray}
\frac{dP_R(y)}{dy}=\frac{y}{P_R(y)}e^{2\alpha t^+}, ~~~~~
\frac{d^2P_R(y)}{dy^2}=\frac{2\beta^2}{1+\alpha^2}\frac{{\rm sinh}\alpha t^+-\alpha{\rm sin}t^+}{(P_R(y))^3}e^{3\alpha t^+}.
\label{dery1}
\end{eqnarray}
Then we get (\ref{rpprop}) for $P_R(y)$ as we do for $P^{-1}_L(y)$ in last paragraph.
\end{proof}

Having these preliminaries, we give the proof of Theorem \ref{AC}.

\begin{proof}[Proof of {\rm Theorem~\ref{AC}}]
If system (\ref{DPWL}) has a sliding periodic orbit, then it is $\Sigma$-equivalent to (\ref{dsgsh}) by Lemmas~\ref{suff}, \ref{norm1} and the definition
of $\Sigma$-equivalence given below the Theorem~\ref{typeAnumber}. Thus we only need to prove Theorem~\ref{AC} for system (\ref{dsgsh}).

If system (\ref{dsgsh}) has two sliding periodic orbits shown in Figure~\ref{2A}(a), switching line $\Sigma$ of (\ref{dsgsh}) is as Figure~\ref{csl}(d).
From Figure~\ref{csl}(d) we observe that the direction of the vector field on $\Sigma^c$ is always rightward.
Thus (\ref{dsgsh}) has no crossing periodic orbits. The conclusion (i) of this theorem is proved.

If system (\ref{dsgsh}) has two sliding periodic orbits shown in one of Figure~\ref{2A}(b)(c), switching line $\Sigma$ of (\ref{dsgsh}) is as Figure~\ref{csl}(f),
implying
\begin{eqnarray}
\delta=1,~~~~\eta>0.
\label{awkf}
\end{eqnarray}
Moreover, we observe that both equilibria $E_L$ and $E_R$ are unstable admissible foci in Figure~\ref{2A}(b)(c). Thus
\begin{eqnarray}
\alpha>0,~~~\beta>0, ~~~\tau>0, ~~~\tau^2-4\Delta<0, ~~~\rho-\gamma_3\eta<0
\label{eiosfjn}
\end{eqnarray}
in system (\ref{dsgsh}), where $\tau=\gamma_1+\gamma_3$ and $\Delta=\gamma_1\gamma_3-\gamma_2$.
We claim that system (\ref{dsgsh}) with (\ref{awkf}) and (\ref{eiosfjn}) has exactly one crossing periodic orbit.
In fact, by the continuous transformation
$$
z\rightarrow\left\{
\begin{aligned}
&\left(\begin{array}{cc}
1 &0\\
0 &1
\end{array}\right)z~~~~~~&&{\rm if}~~x\ge0,\\
&
\left(\begin{array}{cc}
1 &0\\
\kappa &1
\end{array}\right)z~~&&{\rm if}~~x<0
\end{aligned}
\right.
$$
with $\kappa=(\gamma_3-\gamma_1)/2$, we can always assume that system (\ref{dsgsh}) satisfies $\gamma_1=\gamma_3$, (\ref{awkf}) and (\ref{eiosfjn}), i.e., condition (\ref{addcond}). Therefore, we can equivalently prove that system (\ref{dsgsh}) with (\ref{addcond}) has exactly one crossing periodic orbit.
Let $D(y):=P_L^{-1}(y)-P_R(y)$ for all $y\ge y^*:=\max\{y_\eta, 0\}$.
Then the number of crossing periodic orbits of (\ref{dsgsh}) equals to the number of
zeros of $D(y)$. Observing Figure~\ref{2A}(b)(c), we get $D(y^*)<0$. On the other hand, by Lemma~\ref{lp} we get
$$\lim_{y\rightarrow+\infty}D'(y)=\lim_{y\rightarrow+\infty}\frac{dP^{-1}_L(y)}{dy}-\lim_{y\rightarrow+\infty}\frac{dP_R(y)}{dy}
=e^{\alpha\pi}-e^{-\frac{\gamma_3}{\nu}\pi}>0$$
and $D''(y)>0$. The former implies $D(y)\rightarrow+\infty(y\rightarrow+\infty)$, from which we obtain the existence of
an unstable crossing periodic orbit, while the latter implies that system (\ref{dsgsh}) with (\ref{addcond}) has exactly one crossing periodic orbit.
Finally, if it has two sliding periodic orbits shown in one of Figure~\ref{2A}(b)(c), system (\ref{dsgsh}) has exactly one crossing periodic orbit, which is unstable.

If system (\ref{dsgsh}) has a unique sliding periodic orbit shown in one of Figure~\ref{1A}(c)(d), the conditions (\ref{awkf}) and (\ref{eiosfjn}) still hold. Moreover, we can similarly consider system (\ref{dsgsh}) with (\ref{addcond}) and prove that $D(y^*)<0$, $D(y)\rightarrow+\infty(y\rightarrow+\infty)$ and $D''(y)>0$.
Thus (\ref{dsgsh}) has also exactly one crossing periodic orbit, which is unstable. Associate with the last paragraph, the conclusion (ii) of this theorem is proved.

To prove conclusion (iii), let us consider $\alpha>0, \beta=\delta=\eta=1, \gamma_1=-\gamma_2=2, \gamma_3=0, \rho<0$ in system (\ref{dsgsh}), i.e.,
\begin{eqnarray}
\left\{
\begin{aligned}
\dot z&=\left(
\begin{array}{cc}
  2\alpha&1\\
  -1-\alpha^2&0\\
\end{array}
\right)z+
\left(
\begin{array}{c}
  0\\
  1\\
\end{array}
\right)~~~~&&{\rm if}~x>0,\\
\dot z&=\left(
\begin{array}{cc}
  ~2~&~~1~\\
  ~-2~&~~0~\\
\end{array}
\right)z+
\left(
\begin{array}{c}
  1\\
  \rho\\
\end{array}
\right)~~~~~&&{\rm if}~x<0.
\end{aligned}
\right.
\label{example1}
\end{eqnarray}
We claim that there exist a sufficiently small $\epsilon_1>0$ and a function $\rho_c(\alpha)<0$ such that
system (\ref{example1}) has two crossing periodic orbits $\Psi^0, \Gamma_{CC}$ and one
sliding periodic orbit $\Phi$ which is as Figure~\ref{1A}(a) if $0<\alpha<\epsilon_1$ and $\rho=\rho_c(\alpha)$
(see Figure~\ref{EX-1}(b)).
In fact, we have tangency points $T_L=(0, -1)^\top, T_R=(0, 0)^\top$ and switching line $\Sigma$ is as Figure~\ref{csl}(f). Thus, by
(\ref{sliding}) and (\ref{sliding0}) the sliding vector field is $(0, (1-\rho)y+1)^\top$ on $\Sigma^s_a=\{(0, y)^\top: -1<y<0\}$.
This implies that (\ref{example1}) has a unique pseudo-equilibrium at $(0, 1/(\rho-1))^\top$ and
the direction of the sliding vector field is upward (resp. downward) for $y>1/(\rho-1)$ (resp.$y<1/(\rho-1)$).

\begin{figure}[thp]
  \begin{minipage}[t]{0.25\linewidth}
  \centering
  \includegraphics[width=1.4in]{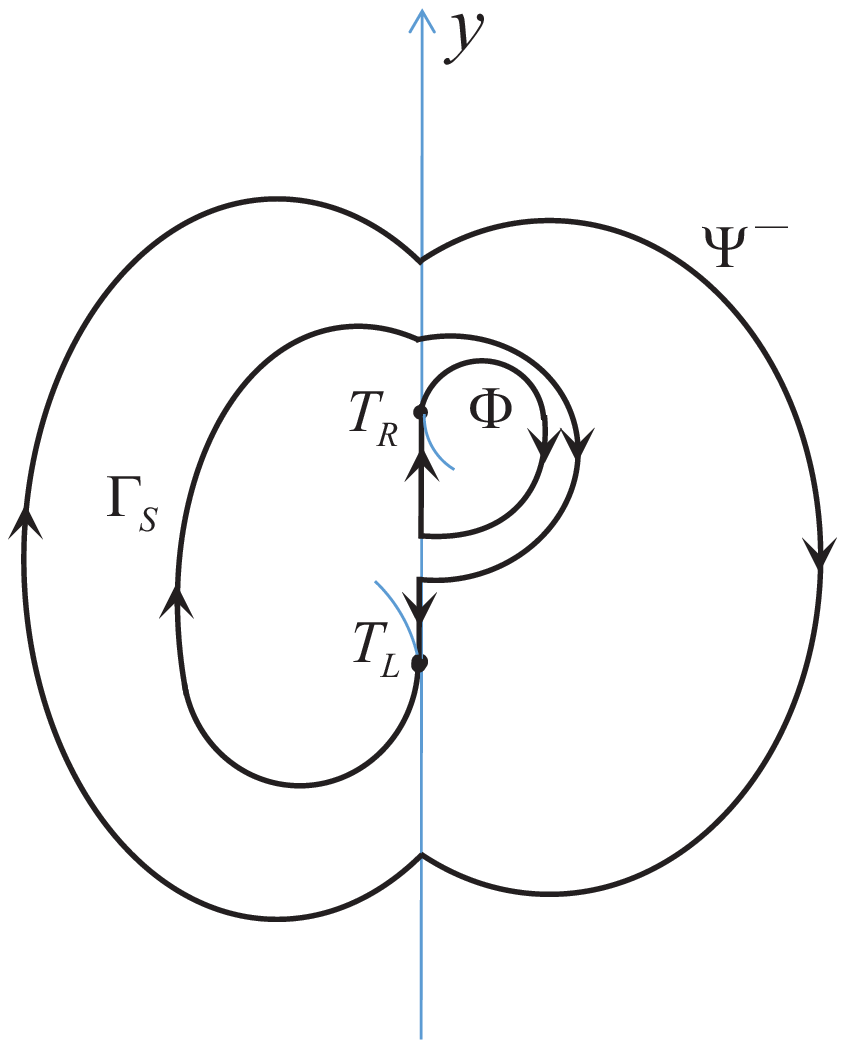}
  \caption*{(a) {\small $\rho_c(\alpha)\!<\!\rho\!<\!\rho_c(\alpha)\!+\!\epsilon_2$}}
  \end{minipage}
  ~~~~~~~~~~~
  \begin{minipage}[t]{0.25\linewidth}
  \centering
  \includegraphics[width=1.4in]{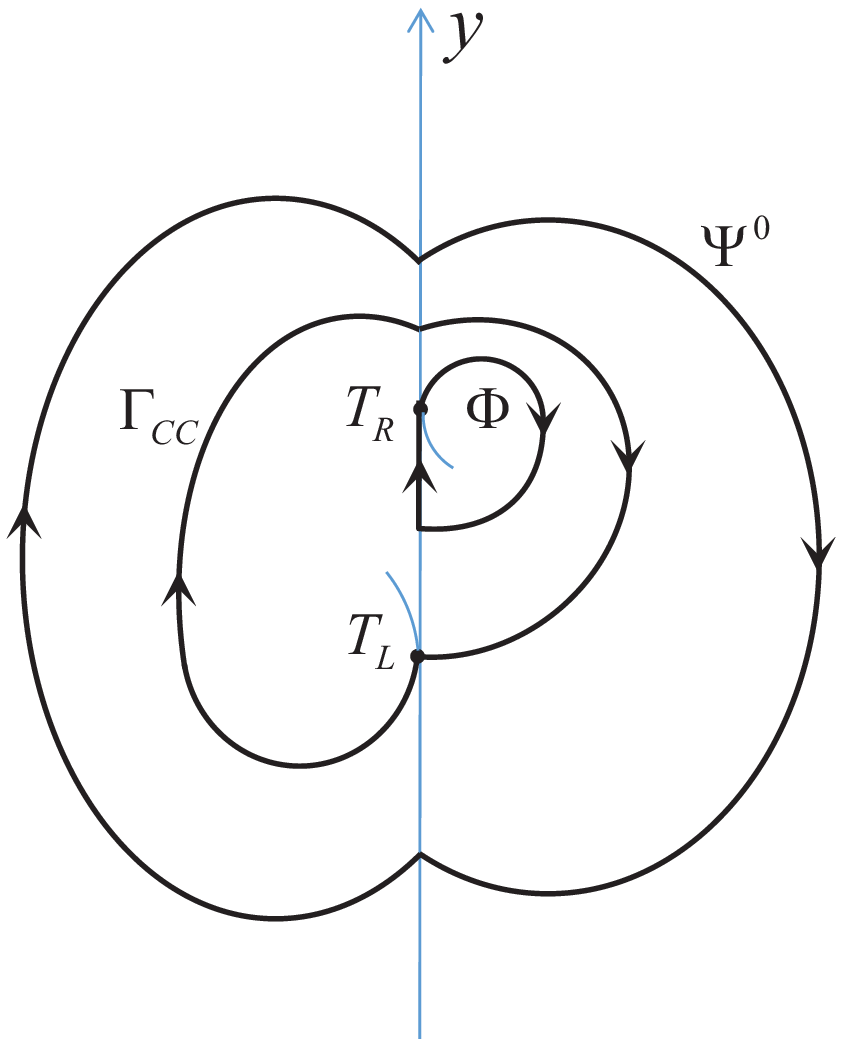}
  \caption*{(b) {\small $\rho=\rho_c(\alpha)$}}
  \end{minipage}~~~~~~~~~~~
  \begin{minipage}[t]{0.25\linewidth}
  \centering
  \includegraphics[width=1.37in]{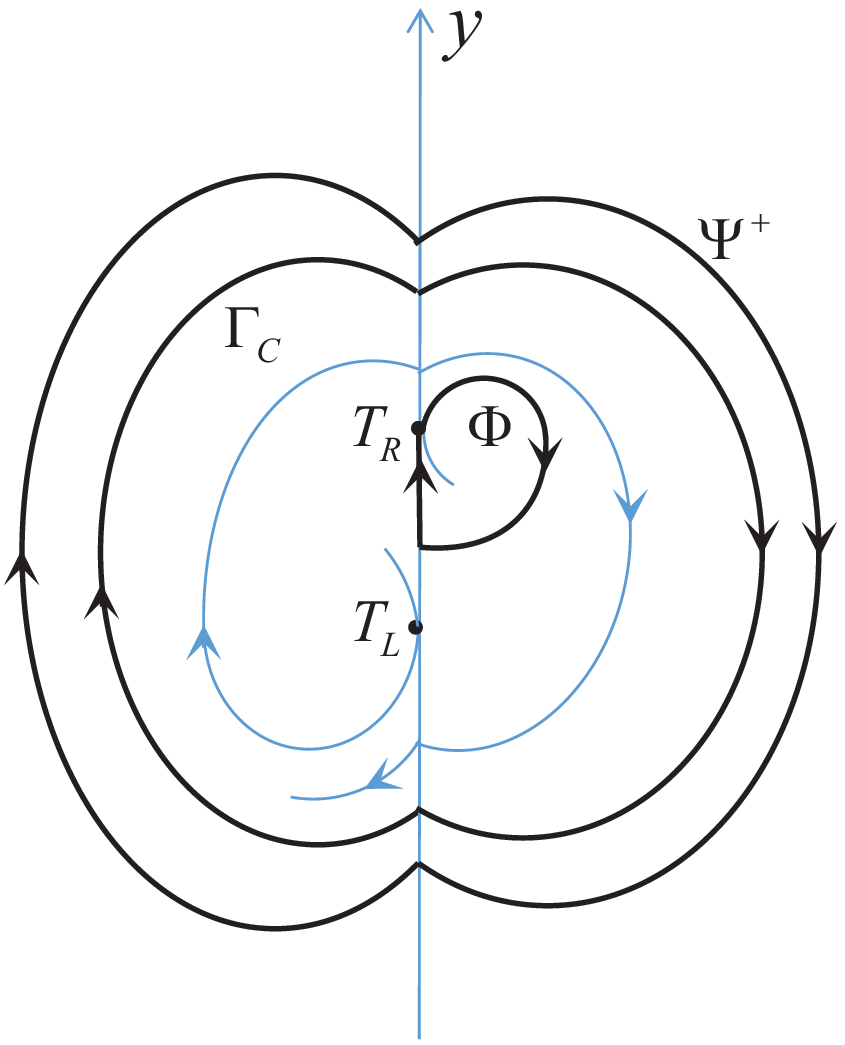}
  \caption*{(c) {\small $\rho_c(\alpha)\!-\!\epsilon_2\!<\!\rho\!<\!\rho_c(\alpha)$}}
  \end{minipage}
\caption{{\small Phase portrait of system (\ref{example1}) with $0<\alpha<\epsilon_1$}}
\label{EX-1}
\end{figure}

On one hand, since $E_R=(1/(\alpha^2+1), -2\alpha/(\alpha^2+1))^\top$ is an unstable admissible focus, $T_R$ is visible and
the orbit of the right system starting from $T_R$ will reach again $\Sigma$ from the right half plane. Denote the reaching point
by $(0, y_1(\alpha))^\top$, by (\ref{rightp}) we have $y_1(\alpha)=e^{\alpha \hat t^+}\sin \hat t^+$.
Clearly, $y_1(\alpha)\rightarrow 0^-$ as $\alpha\rightarrow 0^+$ because $\hat t^+\to 2\pi$ as $\alpha\rightarrow 0^+$.
When $\alpha$ is sufficiently small, $y_1(\alpha)>-1$, which implies that
there exists a point $(0, y_2(\alpha))^\top$ such that the orbit of the right system
starting from it will reach again $\Sigma$ at $T_L$. By the form of the right system,
$y_2(\alpha)\rightarrow 1$ as $\alpha\rightarrow 0^+$. On the other hand,
since $E_L=(\rho/2, -\rho-1)^\top$ is also an unstable admissible focus, $T_L$ is visible
and the orbit of the left system starting from $T_L$ will reach again $\Sigma$ from the left half plane.
Denote the reaching point by $(0, y_3(\rho))^\top$. By (\ref{lpoincare}) we get
$y_3(\rho)=-1+\rho\sin\hat t^-e^{\hat t^-}$, where $\hat t^-\in(\pi, 2\pi)$ satisfies $\varphi_+(\hat t^-)=0$ and is a constant
independent of $\rho$. Clearly, $y_3(\hat \rho)=1$, where $\hat \rho:=2/(\sin\hat t^-e^{\hat t^-})$, and
the unique pseudo-equilibrium lies at $(0, 1/(\hat \rho-1))^\top$ when $\rho=\hat \rho$.
By the continuities of $y_1(\alpha), y_2(\alpha), y_3(\rho)$, there exist a sufficiently small $\epsilon_1>0$ and a constant $\rho_c(\alpha)$
near $\hat \rho$ such that $y_3(\rho_c(\alpha))=y_2(\alpha)$, $y_1(\alpha)$ lies in a sufficiently small neighborhood of $0$
and the unique pseudo-equilibrium lies in a sufficiently small neighborhood of $(0, 1/(\hat \rho-1))^\top$
for all $\alpha\in (0, \epsilon_1)$.  That is, we have two periodic orbits $\Phi$ and $\Gamma_{CC}$ as shown in
Figure~\ref{EX-1}(b).

Now we prove the existence of periodic orbit $\Psi^0$ as shown in Figure~\ref{EX-1}(b) when $\rho=\rho_c(\alpha)$.
It is easy to check that (\ref{example1}) satisfies conditions (\ref{awkf}) and (\ref{eiosfjn}).
A similar analysis to the third paragraph shows that
$D(y)\rightarrow+\infty$($y\rightarrow+\infty$) and $D''(y)>0$. By a straight computation, we get $D(y_3(\rho_c(\alpha)))=0$.
When $y\rightarrow y_3(\rho_c(\alpha))$, $dP^{-1}_L(y)/dy$ and $dP_R(y)/dy$ converge to $-\infty$ and a finite constant
by (\ref{dery}) and (\ref{dery1}). So $D'(y)\rightarrow-\infty$ as $y\rightarrow y_3(\rho_c(\alpha))$,
which implies $D(y)<0$ for $y>y_3(\rho_c(\alpha))$ sufficiently close to $y_3(\rho_c(\alpha))$.
By Zero Point Theorem (\ref{example1}) has a crossing periodic orbit $\Psi^0$ which is different from $\Gamma_{CC}$
if $0<\alpha<\epsilon_1$ and $\rho=\rho_c(\alpha)$. According to the last paragraph, the claim is proved and then conclusion (iii) holds.

To prove conclusion (iv), let us consider
$\alpha=\beta=\delta=1, \gamma_1<-2, \gamma_2=-1-\gamma_1^2/4, \gamma_3=0, \eta>0,
\rho=(4+\gamma_1^2)(e^{2\pi}-1)/8$ in system (\ref{dsgsh}), i.e.,
\begin{eqnarray}
\left\{
\begin{aligned}
\dot z&=\left(
\begin{array}{cc}
  ~2~&~~1~~\\
  ~-2~&~~0~~\\
\end{array}
\right)z+
\left(
\begin{array}{c}
  0\\
  1\\
\end{array}
\right)~~~~&&{\rm if}~x>0,\\
\dot z&=\left(
\begin{array}{cc}
  \gamma_1&1\\
  -1-\frac{\gamma_1^2}{4}&0\\
\end{array}
\right)z+
\left(
\begin{array}{c}
  \eta\\
  \frac{(4+\gamma_1^2)(e^{2\pi}-1)}{8}\\
\end{array}
\right)~~~~~&&{\rm if}~x<0.
\end{aligned}
\right.
\label{example2}
\end{eqnarray}
We claim that there exist two sufficiently small constants $\epsilon_3>0, \epsilon_4>0$ and a function $\eta_c(\gamma_1)$
such that system (\ref{example2}) has two crossing periodic orbits $\Phi^+, \Psi^+$ and
one sliding periodic orbit $\Gamma_S$ which is as Figure~\ref{1A}(b)
if $-2-\epsilon_3<\gamma_1<-2$ and $\eta_c(\gamma_1)<\eta<\eta_c(\gamma_1)+\epsilon_4$ (see Figure~\ref{EX-2}(c)).

\begin{figure}[thp]
  \begin{minipage}[t]{0.25\linewidth}
  \centering
  \includegraphics[width=1.37in]{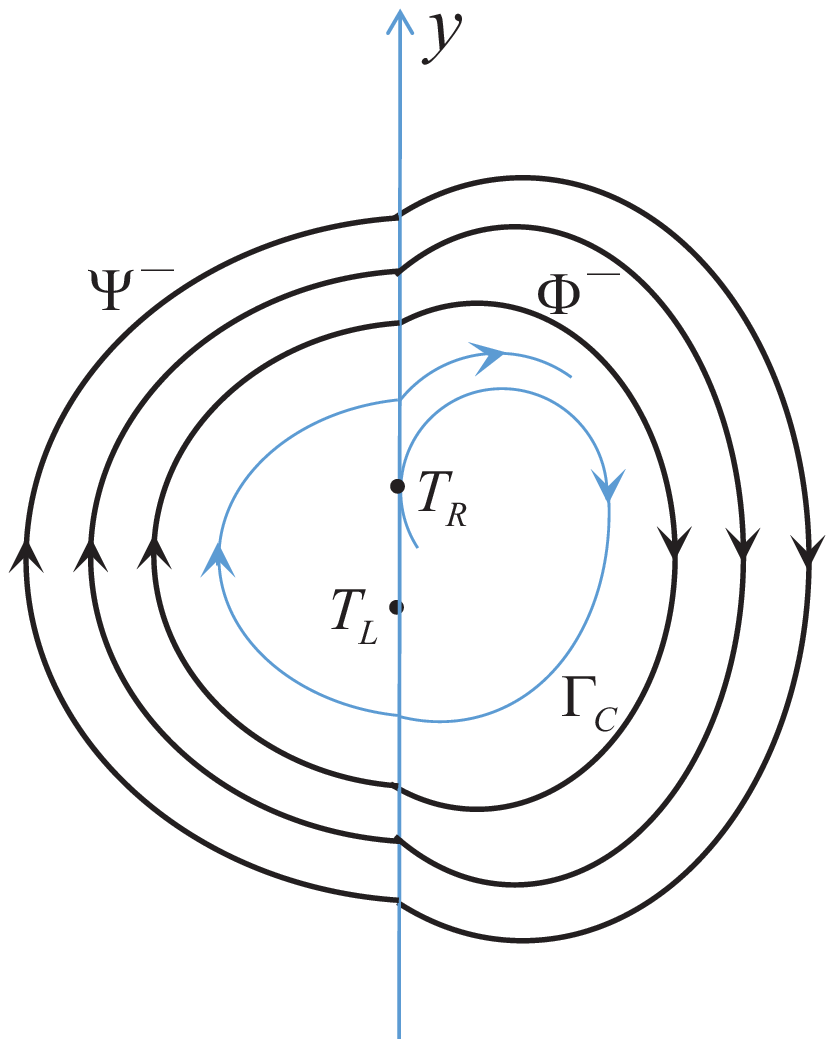}
  \caption*{(a) {\small $\eta_c(\gamma_1)\!-\!\epsilon_4\!\!<\!\eta\!<\!\eta_c(\gamma_1)$}}
  \end{minipage}~~~~~~~~~~~
  \begin{minipage}[t]{0.25\linewidth}
  \centering
  \includegraphics[width=1.30in]{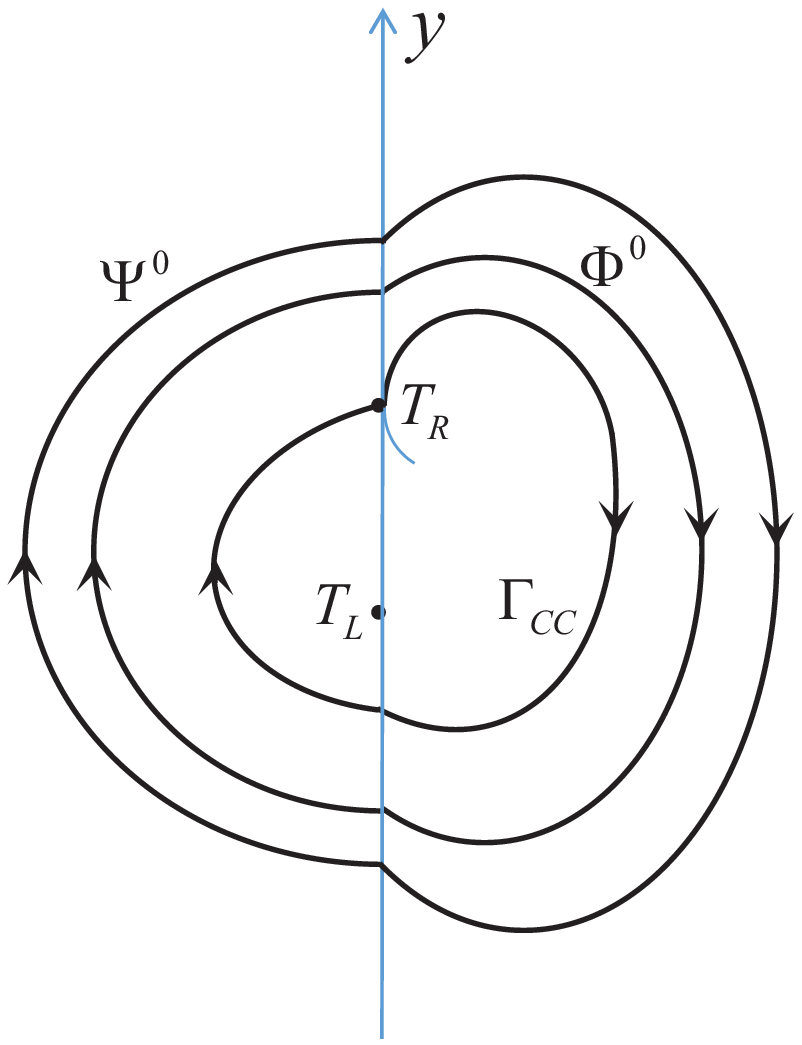}
  \caption*{(b) {\small $\eta=\eta_c(\gamma_1)$}}
  \end{minipage}~~~~~~~~~
  \begin{minipage}[t]{0.25\linewidth}
  \centering
  \includegraphics[width=1.30in]{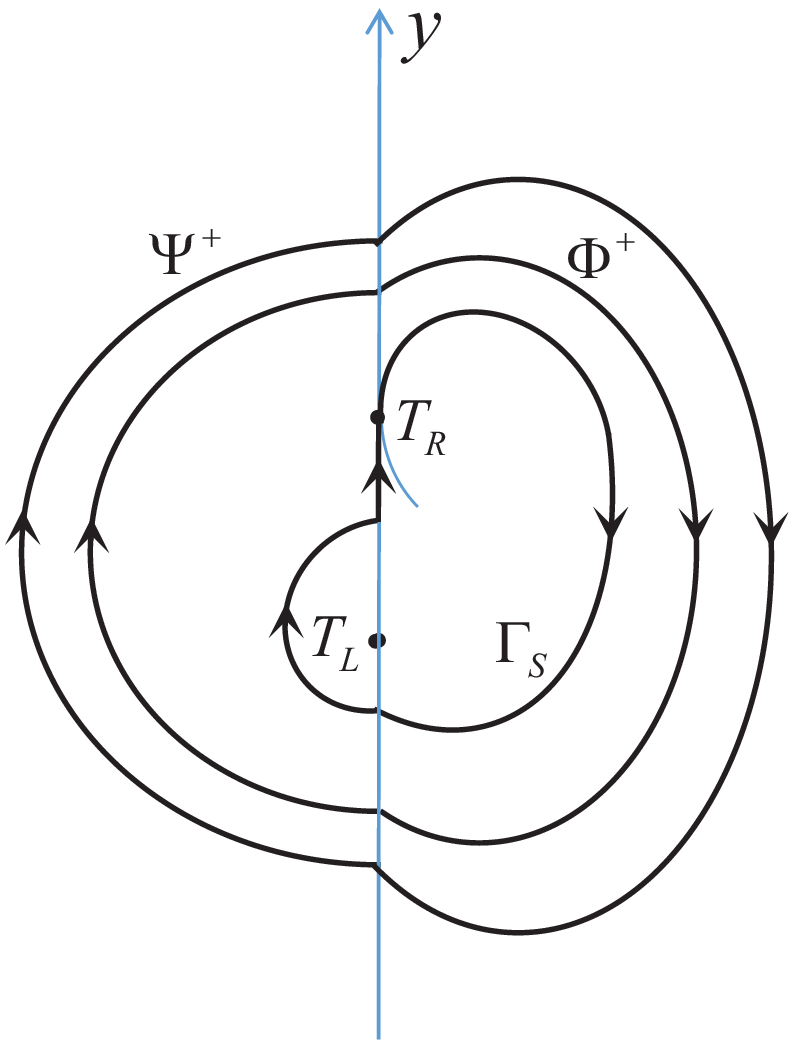}
  \caption*{(c) {\small $\eta_c(\gamma_1)\!\!<\!\eta\!<\!\eta_c(\gamma_1)\!+\!\epsilon_4$}}
  \end{minipage}
\caption{{\small Phase portrait of system (\ref{example2}) with $-2-\epsilon_3<\gamma_1<-2$.}}
\label{EX-2}
\end{figure}

In fact, by the change $(x, y, t)\rightarrow(-x, -y, -t)$ system (\ref{example2}) can be rewritten as
\begin{eqnarray}
\left\{
\begin{aligned}
\dot z&=\left(
\begin{array}{cc}
  -\gamma_1&-1\\
  1+\frac{\gamma_1^2}{4}&0\\
\end{array}
\right)z+
\left(
\begin{array}{c}
  \eta\\
  \frac{(4+\gamma_1^2)(e^{2\pi}-1)}{8}\\
\end{array}
\right)~~~~&&{\rm if}~x>0,\\
\dot z&=\left(
\begin{array}{cc}
  ~-2~&-1~\\
  ~2~&0~\\
\end{array}
\right)z+
\left(
\begin{array}{c}
  0\\
  1\\
\end{array}
\right)~~~~~&&{\rm if}~x<0.
\end{aligned}
\right.
\label{example2c}
\end{eqnarray}
It is easy to check that (\ref{example2c}) satisfies conditions of \cite[Theorem 5(ii)]{ChL-FEEE}, from which
there exist a sufficiently small $\epsilon_3>0$ and a function $\eta_c(\gamma_1)$ such that
(\ref{example2c}) has two structurally stable crossing periodic orbits $\bar\Phi^0$, $\bar\Psi^0$ and a critical crossing periodic orbit $\bar\Gamma_{CC}$ if $-2-\epsilon_3<\gamma_1<-2$ and
$\eta=\eta_c(\gamma_1)$. So we obtain Figure~\ref{EX-2}(b), where $\Gamma_{CC}, \Phi^0, \Psi^0$ correspond to
$\bar\Gamma_{CC}, \bar\Phi^0, \bar\Psi^0$ under transformation $(x, y, t)\rightarrow(-x, -y, -t)$.
Since $\bar\Gamma_{CC}$ is structurally unstable, the crossing-sliding bifurcation happens when $\eta$ varies near $\eta_c(\gamma_1)$.
By \cite[Theorem 5(ii)]{ChL-FEEE} again, there exists a sufficiently small $\epsilon_4>0$ such that the critical crossing periodic orbit
$\bar\Gamma_{CC}$ becomes a structurally stable crossing periodic orbit $\bar\Gamma_C$ if $-2-\epsilon_3<\gamma_1<-2$ and
$\eta_c(\gamma_1)-\epsilon_4<\eta<\eta_c(\gamma_1)$, a sliding periodic orbit $\bar\Gamma_S$ which
can be transformed into Figure~\ref{1A}(b) if $-2-\epsilon_3<\gamma_1<-2$ and $\eta_c(\gamma_1)<\eta<\eta_c(\gamma_1)+\epsilon_4$.
Since $\bar\Phi^0, \bar\Psi^0$ are structurally stable, we obtain Figure~\ref{EX-2}(c) if $\eta_c(\gamma_1)<\eta<\eta_c(\gamma_1)+\epsilon_4$. Hence, this claim is proved and then conclusion (iv) holds.
\end{proof}

In the following, we give some remarks on Theorem~\ref{AC}.

Considering conclusion (ii), we know that system (\ref{DPWL}) has two admissible foci under given conditions.
Thus the uniqueness of crossing periodic orbits is obtained for some parameter regions in the case of two admissible foci.
For system (\ref{DPWL}) of focus-focus type, we also notice that
some sufficient conditions for the uniqueness of crossing periodic orbits are given in \cite{ChL-EE, ChL-FEEE},
but \cite{ChL-EE} is for the case of zero admissible foci and \cite{ChL-FEEE} is for the case of one admissible focus.

Considering conclusion (iii), from its proof we obtain that the inner crossing periodic orbit $\Gamma_{CC}$ is stable and the outer one $\Psi^0$ is unstable. On the other hand, note that $\Gamma_{CC}$ in Figure~\ref{EX-1}(b) is structurally unstable and
the so-called crossing-sliding bifurcation (see \cite{ChL-FEEE1,ChL-YA}) happens when $\rho$ varies near $\rho_c(\alpha)$.
In other word, there exists sufficiently small $\epsilon_2>0$ such that the critical crossing periodic orbit $\Gamma_{CC}$ disappears
and a new sliding periodic orbit $\Gamma_S$ appears  if
$\rho_c(\alpha)<\rho<\rho_c(\alpha)+\epsilon_2$, a new crossing periodic orbit $\Gamma_C$
appears if $\rho_c(\alpha)-\epsilon_2<\rho<\rho_c(\alpha)$. Therefore, we obtain Figure~\ref{EX-1}(a) for the former and Figure~\ref{EX-1}(c)
for the latter due to the structural stability of $\Psi^0$ and $\Phi$. Besides, we observe that in Figure~\ref{EX-1}(a) there are
two sliding periodic orbits which can be transformed into Figure~\ref{2A}(c).
Thus (\ref{example1}) with $0<\alpha<\epsilon_1, \rho_c(\alpha)<\rho<\rho_c(\alpha)+\epsilon_2$
can be regarded as an example to show the existence of Figure~\ref{2A}(c).

Considering conclusion (iv), from its proof we obtain the stability of $\Phi^+$ and $\Psi^+$. In particular,
the inner crossing periodic orbit $\Phi^+$ is unstable and the outer one $\Psi^+$ is stable.

\section{Concluding remarks}
\setcounter{equation}{0}
\setcounter{lm}{0}
\setcounter{thm}{0}
\setcounter{rmk}{0}
\setcounter{df}{0}
\setcounter{cor}{0}

In this paper, for discontinuous piecewise linear system (\ref{DPWL}) we study the number and configuration of sliding periodic orbits, the coexistence of sliding periodic orbits and crossing ones. In this section, we give some concluding remarks to end this paper. For brevity, we
denote the numbers of crossing periodic orbits and sliding periodic orbits by $\mathcal{N}_C$ and $\mathcal{N}_S$, respectively.

In Section 2, we prove that $\mathcal{N}_S\in\{0, 1, 2\}$ in Theorem~\ref{typeAnumber}. Moreover,
we prove that in the sense of $\Sigma$-equivalence and time reversing, the configuration of any sliding periodic orbit is one of these four configurations shown in Figure~\ref{1A} and the configuration of coexistent sliding periodic orbits is one of these three configurations
shown in Figure~\ref{2A}.
Thus, in the cases of  $\mathcal{N}_S=1$ and $\mathcal{N}_S=2$, we show all configurations of
sliding periodic orbits in Figures~\ref{1A} and \ref{2A}, respectively.
In order to show the existence of all possible configurations given in Figures~\ref{1A} and \ref{2A},
in the following we take some examples for each configuration.
Let $\beta_0:=-1/(e^{t^*}\sin t^*)$ and $t^*\in(\pi, 2\pi)$ to satisfy $\cos t^*-\sin t^*-e^{-t^*}=0$.
Then system  (\ref{dsgsh}) has a unique sliding periodic orbit as Figure~\ref{1A}(a)-(d) when \\
(1) $0<\alpha\ll1$, $\beta=\gamma_1=\gamma_2=\gamma_3=\eta=\rho=1,\delta=0$,
\\
(2) $\alpha=\delta=\eta=-\gamma_2=1, \gamma_1=\gamma_3=\rho=0, \beta_0<\beta<2\beta_0$,\\
(3) $\alpha=\delta=\eta=1, \gamma_1=-\gamma_2=2, \gamma_3=0, \beta=\beta_0, 0<\rho+\beta_0\ll 1$,\\
(4) $\alpha=\delta=\eta=1, \gamma_1=-\gamma_2=2, \gamma_3=0, -1\ll \beta-\beta_0<0, 0<\rho+\beta_0\ll 1$,\\
respectively. System  (\ref{dsgsh}) has two sliding periodic orbits as Figure~\ref{2A}(a)-(c) when\\
(5) $\alpha=\beta=-\delta=\eta=\rho=1, -\gamma_1=\gamma_2=2, \gamma_3=0$,\\
(6) $0<\alpha\ll 1, \beta=\delta=\eta=-\rho=1, \gamma_1=2\alpha, \gamma_2=-1-\alpha^2, \gamma_3=0$,\\
(7) $0<\alpha<\epsilon_1, \beta=\delta=\eta=1, \gamma_1=-\gamma_2=2, \gamma_3=0, \rho_c(\alpha)<\rho<\rho_c(\alpha)+\epsilon_2$, \\
respectively.
Here $\epsilon_1, \epsilon_2, \rho_c(\alpha)$ are given in the proof of conclusion (iii) of Theorem~\ref{AC}.
We omit the analysis of these examples because they are similar to the analysis of system (\ref{example1}).

In Section 3, we study the relationship of $\mathcal{N}_S$ and $\mathcal{N}_{C}$.
Here we summarize the type of $(\mathcal{N}_{C}, \mathcal{N}_S)$ in the case $\mathcal{N}_{C}+\mathcal{N}_S>0$.
Let ${\cal T}_{C-S}$ be the set of all types of $(\mathcal{N}_{C}, \mathcal{N}_S)$ in the case $\mathcal{N}_{C}+\mathcal{N}_S>0$.
We claim that
\begin{eqnarray}
\{(0, 1), (1, 1), (2, 1), (0, 2), (1, 2)\}\subseteq {\cal T}_{C-S}.
\label{CSA}
\end{eqnarray}
In fact, the reachability of types $(0, 2), (1,1), (1,2)$, $(2, 1)$ can be obtained directly from Theorem~\ref{AC}.
In example (1), system (\ref{dsgsh}) has a sliding periodic orbit, but it has no crossing periodic orbits because the switching line $\Sigma$ is as shown in Figure~\ref{csl}(b). This implies the reachability of type $(0, 1)$. Thus, (\ref{CSA}) holds.

In many works, the research on the number of periodic orbits of (\ref{DPWL})
only focus on the maximum number of crossing periodic orbits. Moreover, $3$ is the best result as in
\cite{ChL-DC, ChL-CCJ, ChL-EE1, ChL-FEEE, ChL-SX, ChL-LLP, ChL-JLDM, ChL-JLDM1, ChL-LE}.
We have checked in all published articles obtaining three crossing periodic orbits that there exist no sliding periodic orbits
when there are three crossing periodic orbits. Thus, $3$ is also the best result on the maximum number of isolated periodic orbits
in this sense. However, our result provides another viewpoint to obtain three isolate periodic orbits. That is, three isolate periodic
orbits can consist of either two crossing periodic orbits and one sliding periodic orbit or one crossing periodic orbit and
two sliding periodic orbits from (\ref{CSA}).

By Theorem~\ref{AC}, the number of crossing periodic orbits is at most 1 when either $\mathcal{N}_S=2$
or $\mathcal{N}_S=1$ and the unique sliding periodic orbit is as Figure~\ref{1A}(c)(d).
In order to obtain more crossing periodic orbits, we only need to consider either $\mathcal{N}_S=0$ or
$\mathcal{N}_S=1$ and the unique sliding periodic orbit is as Figure~\ref{1A}(a)(b).
By (\ref{CSA}), we get that the maximum number of crossing periodic orbits is at least 2 when the latter holds.

{\footnotesize

}

\end{document}